\newcounter{intronum}
\newtheorem{thm}{Theorem}[section]
\newtheorem{lem}[thm]{Lemma}
\newtheorem{prop}[thm]{Proposition}
\newtheorem{thmi}[intronum]{Theorem}
\newtheorem*{claim}{Claim}
\theoremstyle{definition}
\newtheorem{defn}[thm]{Definition}
\newtheorem{rem}[thm]{Remark}
\newtheorem{constr}[thm]{Construction}
\DeclareMathOperator{\dimension}{dim}
\DeclareMathOperator{\Aut}{Aut}
\DeclareMathOperator{\stabilizer}{Stab}
\newcommand{\neb}{\mathcal N}
\newcommand{\homology}{\ensuremath{{\sf{H}}}}
\newcommand{\field}[1]{\mathbb{#1}}
\newcommand{\integers}{\ensuremath{\field{Z}}}
\newcommand{\reals}{\ensuremath{\field{R}}}
\newcommand{\Euclidean}{\ensuremath{\field{E}}}
\let\oldmarginpar\marginpar
\renewcommand\marginpar[1]{\-\oldmarginpar[\raggedleft\footnotesize #1]%
{\raggedright\footnotesize #1}}
\newcommand{\cuco}{{\mathcal{X}}}
\begin{document}
\title[Cocompactly cubulated graph manifolds]{Cocompactly cubulated graph manifolds}

\author[M.F.~Hagen]{Mark F. Hagen$^\ast$}
\address{Dept. of Math., University of Michigan, Ann Arbor, MI, USA }
\email{markfhagen@gmail.com}
\thanks{$\ast$ This material is based upon work partially supported by the National Science Foundation
under Grant Number NSF 1045119.}

\author[P. Przytycki]{Piotr Przytycki$^\dag$}
\address{Institute of Mathematics, Polish Academy of Sciences, Warsaw, Poland}
\email{pprzytyc@mimuw.edu.pl}
\thanks{$\dag$ Partially supported by MNiSW grant N201 012 32/0718, the Foundation for Polish Science, and National Science Centre DEC-2012/06/A/ST1/00259.}

%\date{\today}
%\subjclass[2010]{Primary: 20F65; Secondary: 57M99}
%\keywords{graph manifold, CAT(0) cube complex, flip manifold, charge}
\maketitle

\begin{abstract}
Let $M$ be a graph manifold. We show that $\pi_1M$ is the fundamental group of a compact nonpositively curved cube complex if and only if $M$ is chargeless. We also prove that in that case $\pi_1M$ is virtually compact special.
\end{abstract}

\section{Introduction}\label{sec:introduction}
A \emph{graph manifold} is a compact oriented aspherical $3$--manifold $M$ that has only Seifert-fibred blocks in its JSJ decomposition. We say that a torsion-free group is (\emph{cocompactly}) \emph{cubulated}  if it is the fundamental group of a (compact) nonpositively curved cube complex. A (cocompactly) cubulated group is (\emph{compact}) \emph{special} if the complex is (compact) \emph{special}, i.e.\ admits a local isometry into the Salvetti complex of a right-angled Artin group.

Liu proved in~\cite{Liu:cube_GraphManifold} that if a graph manifold $M$ admits a nonpositively curved Riemannian metric, then $\pi_1M$ is virtually cubulated (and in fact special). Under the stronger hypothesis that $M$ has nonempty boundary, the same conclusion was obtained in~\cite{PrzytyckiWise:cube_GraphManifold}. However, the resulting cube complex was in general not compact.

The main goal of this paper is to answer Question~9.4 of Aschenbrenner, Friedl, and Wilton \cite{AFW} by characterizing graph manifolds $M$ with $\pi_1M$ virtually cocompactly cubulated, i.e.\ having a finite-index subgroup that acts freely and cocompactly on a CAT(0) cube complex. We show, moreover, that whenever this is the case, $\pi_1M$ is virtually compact special.

We note that if $M$ has no JSJ tori, i.e.\ $M$ is Seifert-fibred, then by \cite[Thm~6.12]{BridsonHaefliger:book} the group $\pi_1M$ is cocompactly cubulated if and only if the Euler number of the Seifert fibration vanishes. In this situation, the cube complex can easily be seen to be virtually special using \cite{Sco}.
If $M$ is a Sol manifold, then $\pi_1M$ is not cocompactly cubulated

Throughout the article, we therefore assume that $M$ is not a Sol manifold and has at least one JSJ torus, so that its underlying graph $\overline\Gamma=(\overline V,\overline E)$ has at least one edge. We also assume that $M$ does not contain $\pi_1$--injective Klein bottles, so that the base orbifolds of all Seifert-fibred blocks are oriented and hyperbolic. For each $\bar v\in \overline V$, we denote by $B_{\bar v}\subset M$ the corresponding Seifert-fibred block, and for each edge $\bar e\in \overline E$, we denote by $T_{\bar e}$ the corresponding JSJ torus.
For an edge $\bar e$ incident to $\bar v$, let $Z^{\bar e}_{\bar v}\subset T_{\bar e}$ be an embedded circle that is a fiber in $B_{\bar v}$.

\begin{defn}
\label{def:chargeless}
A graph manifold $M$ is \emph{chargeless} if for every block $B_{\bar v}$ we can assign integers $n_{\bar e}$ to all edges $\bar e=(\bar v, \bar v')$,
so that in integral homology $\homology_1(B_{\bar v})$ we have $\sum_{\bar e} n_{\bar e}[Z^{\bar e}_{\bar v'}]=0$.
\end{defn}

In other words, a graph manifold is chargeless if in each block there is a horizontal surface whose boundary circles are vertical in adjacent blocks.
Note that if $M'$ is a finite cover of a graph manifold $M$, then $M'$ is chargeless if and only if $M$ is chargeless. Our first result is the following.

\begin{thmi}\label{thmi:compact_special}
Let $M$ be a chargeless graph manifold. Then $\pi_1M$ is virtually compact special.
\end{thmi}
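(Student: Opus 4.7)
I plan to build a $\pi_1M$--invariant wallspace on the universal cover $\widetilde M$ whose walls come from two sources---vertical surfaces supported in single blocks, and horizontal surfaces provided by chargelessness---and then apply Sageev's construction and the Haglund--Wise specialness criteria. After passing to a finite regular cover in which every block $B_{\bar v}$ is a trivial circle bundle $\Sigma_{\bar v}\times S^1$ over an oriented hyperbolic surface with boundary (chargelessness transfers to finite covers), the remark after Definition~\ref{def:chargeless} provides, in each block, an immersed $\pi_1$--injective horizontal surface $S_{\bar v}\to B_{\bar v}$ whose boundary on each $T_{\bar e}$ consists of $n_{\bar e}$ parallel copies of the neighbor's fiber $Z^{\bar e}_{\bar v'}$.

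The walls in $\widetilde M$ are then the connected lifts of (i) vertical surfaces $\gamma\times S^1 \subset B_{\bar v}$ for filling families of geodesics $\gamma\subset \Sigma_{\bar v}$ in each block, together with (ii) each $S_{\bar v}$ closed off across its incident JSJ tori by vertical annuli in the neighboring block---such annuli exist precisely because $\partial S_{\bar v}$ consists of fibers of the neighbor. Sageev's construction applied to this wallspace yields a CAT(0) cube complex $X$ on which $\pi_1M$ acts freely.

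The central obstacle is cocompactness of this action, and this is exactly where chargelessness is indispensable. Vertical walls alone give a non-cocompact action, as in~\cite{PrzytyckiWise:cube_GraphManifold}, because they fail to separate points on the JSJ tori in enough directions. The (ii)--walls remedy this: on each $T_{\bar e}$, an (ii)--wall from the $\bar v$--side meets $T_{\bar e}$ in the fiber slope of $B_{\bar v'}$, while one from the $\bar v'$--side meets $T_{\bar e}$ in the fiber slope of $B_{\bar v}$. These two slopes are transverse (the base orbifolds being hyperbolic), and together with the finitely many vertical--wall slopes they yield only finitely many $\pi_1T_{\bar e}$--orbits of wall intersections on each JSJ torus. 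Combined with the standard cocompactness argument for surface--group walls inside each block, this gives cocompactness of $\pi_1M \curvearrowright X$.

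Virtual specialness is then obtained by passing to a further finite cover in which the Haglund--Wise specialness criteria hold: hyperplanes are two-sided, embedded, and free of self-- and inter-osculations. The existence of such a cover reduces to separability of the hyperplane stabilizers in $\pi_1M$, which in turn follows---via a combination theorem along the JSJ tori---from separability of surface subgroups of block groups, a consequence of the virtual specialness of each block established in~\cite{Liu:cube_GraphManifold,PrzytyckiWise:cube_GraphManifold}.
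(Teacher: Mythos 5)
Your wallspace is essentially the paper's ``efficient collection'' (horizontal surfaces closed off by vertical annuli in the neighbouring blocks, plus vertical tori over filling curves and the JSJ tori), so the overall route matches; but the cocompactness step, which you correctly single out as the crux, is not actually proved. Cocompactness of the dual cube complex amounts to showing that every family of pairwise-crossing walls passes uniformly close to a single point of $\widetilde M$ (finitely many orbits of maximal cubes); counting $\pi_1T_{\bar e}$--orbits of intersections of \emph{pairs} of walls on each JSJ torus does not control this, and there is no ``standard cocompactness argument for surface-group walls inside each block'' that applies here, since the turbine-type walls are not contained in a block and distinct lifts of them could a priori cross each other far from any common point. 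What is missing is precisely the content of Lemma~\ref{lem:flip->far} and the proof of Proposition~\ref{prop:flip->cocompact}: (i) the Helly property of the Bass--Serre tree, which localizes any pairwise-crossing family to one block $\widetilde B_v$; (ii) the fact that two distinct lifts that are both horizontal in $\widetilde B_v$ are \emph{disjoint} --- this uses that the turbine surfaces lift to embeddings in the cover of $M$ induced by a simple (e.g.\ universal) cover of the underlying graph (Remark~\ref{rem:turbine_embed}), and it is exactly what guarantees at most one horizontal wall in any crossing family; and (iii) the argument that the remaining walls, being vertical in $\widetilde B_v$, meet that single horizontal wall in uniform quasi-geodesics of the quasi-tree $S^{\mathrm{hor}}\cap\widetilde B_v$, pairwise at bounded distance, so that some point is uniformly close to all of them. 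Your observation that the two fiber slopes on $T_{\bar e}$ are transverse is true but does not by itself exclude unbounded pairwise-crossing families.

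There is also a gap in the specialness step: you reduce to separability of hyperplane stabilizers, but the Haglund--Wise criterion requires in addition that for each pair of crossing hyperplanes $\mathfrak h,\mathfrak h'$ the double coset $\stabilizer\mathfrak h\,\stabilizer\mathfrak h'$ be separable --- this is what removes inter-osculations in a finite cover; separability of the individual stabilizers alone is not sufficient. Both ingredients hold for virtually embedded surfaces in graph manifolds (Theorems~1.1 and~1.2 of~\cite{PrzytyckiWise:cube_GraphManifold}), so this is repairable by invoking the double coset separability result, but as written your argument for virtual specialness is incomplete.
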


Our main theorem is the following converse.

\begin{thmi}\label{thmi:cocompact_cubulation}
Let $M$ be a graph manifold. If $\pi_1M$ is virtually cocompactly cubulated, then $M$ is chargeless.
\end{thmi}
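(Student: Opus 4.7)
The plan is to argue contrapositively: assuming $\pi_1 M$ is virtually cocompactly cubulated, we wish to conclude that $M$ is chargeless. Since the chargeless condition is preserved under finite covers (as noted above), we may assume $\pi_1 M$ itself acts freely and cocompactly on a CAT(0) cube complex $\tilde{\cuco}$. The task is then, for each block $B_{\bar v}$, to produce integers $n_{\bar e}$, not all zero, realizing $\sum_{\bar e}n_{\bar e}[Z^{\bar e}_{\bar v'}]=0$ in $\homology_1(B_{\bar v})$.

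Fix a block $B_{\bar v}$. Its fiber element $f_{\bar v}\in\pi_1 B_{\bar v}$ is central, and by freeness it acts on $\tilde{\cuco}$ as a hyperbolic isometry. Centrality forces $\mathrm{Min}(f_{\bar v})$ to be $\pi_1 B_{\bar v}$-invariant and to split isometrically as $\mathrm{Axis}(f_{\bar v})\times Y_{\bar v}$, so that the orbifold quotient $\pi_1 B_{\bar v}/\langle f_{\bar v}\rangle\cong\pi_1\Sigma_{\bar v}$ acts on the convex subcomplex $Y_{\bar v}$. Cocompactness of the ambient action provides enough hyperplanes of $\tilde{\cuco}$ cutting this product region transversely to the $\mathrm{Axis}(f_{\bar v})$-factor that, restricted to the block, each such hyperplane yields a codimension-$1$ subgroup of $\pi_1 B_{\bar v}$ not containing any power of $f_{\bar v}$; by standard Seifert-fibered arguments this corresponds to an embedded horizontal surface in $B_{\bar v}$.

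The delicate point is to promote such a horizontal surface to one whose boundary is \emph{vertical in the adjacent block}, i.e.\ parallel to $Z^{\bar e}_{\bar v'}$ on each incident torus $T_{\bar e}$. For $\bar e=(\bar v,\bar v')$, the adjacent fiber $f_{\bar v'}$ commutes with $\pi_1 T_{\bar e}$ and thus its Min-set contains a $\pi_1 T_{\bar e}$-invariant flat. The proof should leverage the interaction of the two product decompositions $\mathrm{Min}(f_{\bar v})\cong\mathrm{Axis}(f_{\bar v})\times Y_{\bar v}$ and $\mathrm{Min}(f_{\bar v'})\cong\mathrm{Axis}(f_{\bar v'})\times Y_{\bar v'}$ inside $\tilde{\cuco}$: the flat stabilized by $\pi_1 T_{\bar e}$ contains both axes, and hyperplanes meeting this flat must be sorted by whether they commensurate $\langle f_{\bar v}\rangle$ or $\langle f_{\bar v'}\rangle$. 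The horizontal surfaces contributing to the chargeless relation correspond to hyperplane stabilizers that, on restriction to $\pi_1 T_{\bar e}$, commensurate $\langle f_{\bar v'}\rangle$; assembling the resulting boundary contributions in $\homology_1(B_{\bar v})$ then yields $\sum_{\bar e} n_{\bar e}[Z^{\bar e}_{\bar v'}]=0$ because $\partial S$ bounds.

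The main obstacle is precisely this control over boundary slopes, which is where cocompactness (as opposed to mere properness) must bite: hyperplane stabilizers must exist in sufficient abundance that every direction in $\pi_1 B_{\bar v}$ transverse to $\langle f_{\bar v}\rangle$ is registered, yet each such hyperplane is constrained at the JSJ tori to commensurate the adjacent fiber rather than some arbitrary slope. I expect the argument to combine central-element rigidity for CAT(0) cube complex actions (ensuring that $f_{\bar v'}$-axes behave rigidly with respect to hyperplanes) with a Caprace--Sageev-type essential/rank rigidity statement (forcing enough transverse hyperplanes) and a careful bookkeeping of how the two fiber directions at each torus interact under the cubulation; showing that the slopes actually produced realize the chargeless relation, rather than some non-chargeless system, is the crux of the theorem.
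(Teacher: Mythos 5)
Your proposal correctly identifies the general shape of the argument (reduce to a genuine cocompact action, exploit the central fiber of each block, get a product structure, apply Caprace--Sageev, and control slopes at the JSJ tori), but it stops exactly where the paper's real work begins, and several of its intermediate assertions do not hold as stated. First, the splitting $\mathrm{Min}(f_{\bar v})\cong\mathrm{Axis}(f_{\bar v})\times Y_{\bar v}$ is only a CAT(0)-metric statement: $Y_{\bar v}$ is not a cube complex and, more importantly, $\pi_1 B_{\bar v}$ has no reason to act cocompactly on $\mathrm{Min}(f_{\bar v})$, so you cannot extract ``enough hyperplanes'' or apply rank rigidity there. The paper instead has to \emph{construct} convex cocompact cores: first for each edge group (Lemma~\ref{lem:cocompact_cores_for_JSJ}, which needs the flat/half-flat constructions of Section~\ref{sec:flat_construct} together with the asymptotic-cone Lemma~\ref{lem:no_E3} ruling out a quasi-isometrically embedded $\widetilde T_e\times[0,\infty)$), and then for each block group (Proposition~\ref{prop:core_for_block}, via the pole argument); only then can Caprace--Sageev be applied to the essential core to get a genuinely cubical product $\mathcal V_a\times\mathcal V_b$ with $H_a=\integers_v\cap H'$ and $H_b$ a finite-index subgroup of the base group (Lemma~\ref{lem:essential_core}). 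Also, a hyperplane stabilizer meeting $\pi_1 B_{\bar v}$ in a codimension-$1$ subgroup missing the fiber does not by itself produce an embedded horizontal surface; in the paper the homology relation $\sum_c n_c[\breve c]=0$ comes from the boundary curves of the cover $F'_{\bar v}$ of the base determined by $H_b$, not from surfaces dual to individual hyperplanes.

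Second, the step you flag as ``the crux'' --- that the boundary slopes on $T_{\bar e}$ are forced to be the adjacent fiber $Z^{\bar e}_{\bar v'}$ rather than arbitrary slopes --- is left entirely unproved, and it is where cocompactness actually bites. The paper proves it by showing (Proposition~\ref{prop:straight_edge}) that the $G_e$-essential hyperplanes meet a $G_e$-cocompact flat in exactly \emph{two} parallel families, so $G_e$ has exactly two canonical ``straight'' subgroups $\integers_a,\integers_b$; this requires ruling out the $n\geq 3$ flat alternative of Lemma~\ref{lem:straight} via Lemma~\ref{lem:no_E3}. The final Claim then identifies $H_a$ and $G_e\cap H_b$ with these straight subgroups up to finite index, and --- the decisive symmetry you only gesture at --- applying the same identification from the block on the \emph{other} side of $e$ forces $\integers_a=\integers_v$ and $\integers_b=\integers_{v'}$, i.e.\ $\langle[\breve c]\rangle=\langle[Z^{\bar e}_{\bar v'}]\rangle$. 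Without the two-parallel-families dichotomy and this two-sided comparison, your sorting of hyperplanes at the torus by which fiber they ``commensurate'' has no justification: a priori a cubulation could present arbitrary slopes there, which is precisely what the theorem must exclude. As it stands, the proposal is a plausible plan rather than a proof.
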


Theorem~\ref{thmi:cocompact_cubulation} is one of few results giving an obstruction to being cocompactly cubulated for a specific class of groups. Another notable result of this type is Wise's characterization of tubular groups that are cocompactly cubulated \cite[Thm~5.8]{Wise:Tubular}.

\medskip
\noindent \textbf{Organization.} In Section~\ref{sec:flip_implies_cocompact} we introduce notation used to study graph manifolds. We construct an efficient family of surfaces in a graph manifold. The CAT(0) cube complex dual to the resulting system of walls in the universal cover is cocompact, as required in Theorem~\ref{thmi:compact_special}. In Section~\ref{sec:flat_construct} we study how images of isometric embeddings of $\Euclidean^2$ in a CAT(0) cube complex may intersect hyperplanes. We describe intersection patterns that allow one to recognize a combinatorial flat or a half-flat of dimension $>2$ in the complex. In Section~\ref{sec:cocompact_implies_flip} we show that such objects are not allowed in cocompact cubulations of graph manifolds. This enables us to find cocompact cubical convex cores for $\pi_1B_{\bar v}$. We then use Caprace--Sageev's rank-rigidity result to limit the possible cubulations of the blocks. The cubulations of adjacent blocks interact along what we call \emph{straight subgroups}, and analyzing them is the last step in the proof of Theorem~\ref{thmi:cocompact_cubulation}.

\medskip
\noindent \textbf{Prerequisities.} We assume basic knowledge of CAT(0) cube complexes and hyperplanes. We consider two metrics: the combinatorial metric on the $1$--skeleton with combinatorial geodesics and isometric embeddings on $1$--skeleta; and the CAT(0) metric on the entire complex, with CAT(0) geodesics and isometrically embedded copies $\mathcal E$ of $\Euclidean^2$. For an introduction to CAT(0) cube complexes, we recommend Sageev's survey article~\cite{SagPCMI}.

\medskip
\noindent \textbf{Acknowledgements.} We thank Henry Wilton and Daniel T. Wise for encouragement and Jordan Sahattchieve for discussions. M.F.H. thanks the Institute of Mathematics of the Polish Academy of Sciences for its hospitality during the period in which most of this work was completed.

\section{Chargeless graph manifolds}\label{sec:flip_implies_cocompact}
In this section we prove Theorem~\ref{thmi:compact_special}.

\subsection{Preliminaries}

Let $M$ be a compact oriented irreducible $3$--manifold that is not a Sol manifold and does not contain $\pi_1$--injective Klein bottles. The manifold $M$ contains a minimal collection of incompressible tori, called \emph{\textnormal{JSJ} tori}, all of whose complementary components, called \emph{blocks} are Seifert-fibred or atoroidal \cite[Thm~3.4]{Bonahon}. This decomposition is unique up to isotopy. We say that $M$ is a \emph{graph manifold} if all of the blocks are Seifert-fibred, and there is at least one JSJ torus.

The \emph{underlying graph} $\overline\Gamma=(\overline V,\overline E)$ of $M$ is the graph dual to the JSJ decomposition.
For each $\bar v\in V$, we denote by $B_{\bar v}$ the corresponding block, and for each edge $\bar e\in \overline E$, we denote by $T_{\bar e}$ the corresponding JSJ torus.
Let $F_{\bar v}$ be the base orbifold of $B_{\bar v}$, which is oriented and hyperbolic.

Let $G=\pi_1M$, which we identify with the group of covering transformations of the universal cover $\widetilde M$ of $M$. Let $\Gamma=(V,E)$ be the underlying Bass--Serre tree of $\widetilde M$ arising from the JSJ decomposition of $M$. The vertices $v\in V$ of $\Gamma$ correspond to the components $\widetilde B_v\subset \widetilde M$ of the preimages of blocks in $M$, which we also call \emph{blocks}. The edges $e\in E$ of $\Gamma$ correspond to the components $\widetilde T_e\subset \widetilde M$ of the preimages of JSJ tori in $M$, which we call \emph{\textnormal{JSJ} planes}. We denote by $G_v< G$ the stabilizer of the block $\widetilde B_v\subset\widetilde M$ and by $G_e< G$ the stabilizer of the JSJ plane $\widetilde T_e$.

An immersed surface in a block is \emph{horizontal} if it is transverse to the fibers and \emph{vertical} if it is a union of fibers.

\subsection{Efficient collection}
We will cubulate a chargeless graph manifold using a family of surfaces of the following type.

\begin{defn}[Turbine collection]
Let $M$ be a chargeless graph manifold. A \emph{turbine} collection $\mathcal{S}^\mathrm{tur}$ is a collection of surfaces that are immersed in $M$ and have the following form. For each block $B_{\bar v}$ consider an embedded surface $S_{\bar v}\subset B_{\bar v}$ such that $\partial S_{\bar v}\cap T_{\bar e}$ is the union of $n_{\bar e}$ parallel copies of $Z^{\bar e}_{\bar v'}$ from Definition~\ref{def:chargeless}. Let $A_{\bar e}$ be a non-$\partial$--parallel vertical annulus in $B_{\bar v'}$, both of whose boundary components are homotopic to $Z^{\bar e}_{\bar v'}$ in $T_{\bar e}$. Gluing $2n_{\bar e}$ parallel copies of the annuli $A_{\bar e}$ with $2$ parallel copies of $S_{\bar v}$ yields a surface $S^\mathrm{tur}_{\bar v}$ properly immersed in $M$. We let $\mathcal{S}^\mathrm{tur}$ denote the collection of $S^\mathrm{tur}_{\bar v}$ for all $\bar v$, together with some non-$\partial$--parallel vertical annuli $A$ with $\partial A\subset T$ for all boundary tori $T$ of $M$.
\end{defn}

\begin{rem}\label{rem:turbine_embed}
Let $\Gamma'\rightarrow\overline \Gamma$ be a cover of the underlying graph $\overline \Gamma$ of $M$. Let $M'\rightarrow M$ be the induced cover of $M$, i.e.\ the graph of spaces whose underlying graph is $\Gamma'$ and whose vertex spaces are isomorphic to the various blocks of $M$. If $\Gamma'$ is simple, i.e.\ has no loops or double edges, then every surface in $\mathcal S^\mathrm{tur}$ lifts to an embedding in $M'$.
\end{rem}

\begin{defn}[Torus collection, efficient collection]
Each block $B_{\bar v}$ of a graph manifold $M$ has a finite cover $B'_{\bar v}$ that is a product of a circle with a surface $F'_{\bar v}$. A \emph{torus} collection $\mathcal S_{\bar v}^\mathrm{tor}$ is a family of vertical tori in $B'_{\bar v}$ whose base curves \emph{fill} $F'_{\bar v}$. With respect to some hyperbolic metric on $F'_{\bar v}$ this means that the complementary components of the union of the geodesic representatives of the base curves are discs or annular neighborhoods of the boundary. Each torus $T\in\mathcal S^{\mathrm{tor}}_{\bar v}$ is equipped with a map to $M$ factoring through $B'_{\bar v}$. Let $\mathcal S^\mathrm{tor}$ be the union of the tori in all $\mathcal S_{\bar v}^\mathrm{tor}$ together with all the JSJ tori of $M$.

The union $\mathcal{S}^\mathrm{eff}=\mathcal{S}^\mathrm{tur}\cup \mathcal{S}^\mathrm{tor}$ of a turbine collection and a torus collection for a chargeless graph manifold is called an \emph{efficient} collection. We also assume that $\mathcal S^\mathrm{eff}$ is in general position.
Let $\widetilde{\mathcal{S}}^\mathrm{eff}$ be the collection of all connected surfaces in the universal cover $\widetilde{M}$ of $M$ covering the surfaces of $\mathcal{S}^\mathrm{eff}.$
\end{defn}

Each surface in $\widetilde{\mathcal{S}}^\mathrm{eff}$ cuts $\widetilde{M}$ into two halfspaces and the collection of such pairs endows $\widetilde{M}$ with a Haglund--Paulin wallspace structure (see e.g.\ \cite[Sec~2.1]{HruskaWise}).

\begin{constr}
\label{constr:Sageev}
Sageev's construction yields the \emph{\textnormal{CAT(0)} cube complex $\cuco$ dual to $\widetilde{\mathcal{S}}^\mathrm{eff}$}.
A vertex $x$ of $\cuco$ is a choice of a closed halfspace $x(S)$ for each $S\in\widetilde{\mathcal{S}}^\mathrm{eff}$ with $x(S)\cap x(S')\neq \emptyset$ for all $S,S'\in\widetilde{\mathcal{S}}^\mathrm{eff}$. Cubes of dimension $k$ are spanned by the sets of $2^k$ vertices differing on $k$ surfaces $S$. If that complex is disconnected, we restrict to the connected component containing the vertices $x$ for which there is a point $m\in \widetilde M$ with $m\in x(S)$ for all $S$.

The group $\pi_1M$ acts naturally on $\cuco$. Hyperplanes of $\cuco$ correspond to surfaces in $\widetilde{\mathcal{S}}^\mathrm{eff}$ and their stabilizers coincide. Maximal cubes of $\cuco$ correspond to maximal collections of pairwise intersecting surfaces in  $\widetilde{\mathcal{S}}^\mathrm{eff}$.
\end{constr}
Theorem~\ref{thmi:compact_special} is an immediate consequence of the following.

\begin{prop}
\label{prop:flip->cocompact}
Let $M$ be a chargeless graph manifold with an efficient collection $\mathcal{S}^\mathrm{eff}.$ Let $\cuco$ be the \textnormal{CAT(0)} cube complex dual to $\widetilde{\mathcal{S}}^\mathrm{eff}.$ Then the action of $\pi_1M$ on $\mathcal{X}$ is free and cocompact. Moreover, $\pi_1M\backslash \cuco$ is virtually special.
\end{prop}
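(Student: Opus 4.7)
\medskip
\noindent\textbf{Proof plan.}
My approach is to analyse $\cuco$ along the Bass--Serre tree $\Gamma$, exploiting the product structure of blocks. As a preliminary reduction I would replace $M$ by a regular finite cover whose underlying graph is simple, so that by Remark~\ref{rem:turbine_embed} each surface of $\mathcal{S}^{\mathrm{eff}}$ embeds and each block is a product $F\times S^1$. Chargelessness and the three conclusions of the proposition are all stable under finite covers, so no generality is lost.

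Inside a block $\widetilde B_v\cong\widetilde F_v\times\reals$, the surfaces of $\widetilde{\mathcal S}^{\mathrm{eff}}$ meeting $\widetilde B_v$ fall into two families. The \emph{horizontal} family consists of lifts of the copies of $S_{\bar v}$, which are pairwise disjoint planes of the form $\widetilde F_v\times\{t\}$. The \emph{vertical} family comprises the JSJ planes $\widetilde T_e$ on $\partial\widetilde B_v$, lifts of tori in $\mathcal S^{\mathrm{tor}}_{\bar v}$, and lifts of the annuli $A_{\bar e}$ contributed to $B_{\bar v}$ by neighbouring turbines $S^{\mathrm{tur}}_{\bar v'}$; each of these has the form $\gamma\times\reals$, and by the defining filling property of the torus collection the union of their base curves fills $F_{\bar v}$. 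The cube complex dual to a filling family of geodesics on a hyperbolic orbifold is a $2$--dimensional cube complex on which $\pi_1^{\mathrm{orb}}F_{\bar v}$ acts cocompactly; combined with the tree dual to the disjoint horizontal planes this makes the block cube complex $\cuco_v$ a $G_v$--cocompact product. Adjacent $\cuco_v$ and $\cuco_{v'}$ agree on a $\integers^2$--cocompact Euclidean plane subcomplex along each shared JSJ plane, and assembling via the cofinite action of $\pi_1M$ on $\Gamma$ gives cocompactness of $\cuco$.

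Freeness would be established by producing, for each non-trivial $g\in\pi_1M$, a wall of $\widetilde{\mathcal S}^{\mathrm{eff}}$ separating a point from its $g$--translate. If $g$ acts hyperbolically on $\Gamma$, use any JSJ plane crossing its axis. If $g\in G_v$ has non-trivial image in $\pi_1^{\mathrm{orb}}F_{\bar v}$, use a vertical torus of $\mathcal S^{\mathrm{tor}}_{\bar v}$ whose base curve separates in $\widetilde F_v$, available by the filling property and hyperbolicity of $F_{\bar v}$. If $g\in G_v$ is a non-zero multiple of the fiber, use a horizontal lift of $S_{\bar v}$, which separates because $S_{\bar v}$ has non-zero algebraic intersection with a generic fiber. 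Combined with Construction~\ref{constr:Sageev}, which identifies cube stabilizers as stabilizers of maximal families of pairwise crossing walls, this forces all cube stabilizers to be trivial.

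For virtual specialness I would verify the Haglund--Wise criteria after a further finite cover. Two-sidedness is arranged via orientation covers of the surfaces in $\mathcal S^{\mathrm{eff}}$, and absence of self-intersection follows from the embedding secured in the preliminary reduction. The remaining conditions -- absence of self-osculation and inter-osculation -- reduce to separability and double coset separability of the cyclic and surface subgroups of $\pi_1M$ carried by walls and their pairwise intersections; these follow from the product structure of the Seifert block groups combined with the known separability of embedded surface subgroups in graph manifold groups. The inter-osculation condition is the main obstacle I anticipate, demanding separation of double cosets of two transverse surface subgroups across a JSJ torus; I would handle it via canonical completion after reducing, using the product-block structure, to separability statements inside a single block group.
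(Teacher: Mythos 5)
Your proposal has a genuine gap at its central step, cocompactness. You assert that $\cuco$ is assembled from block complexes $\cuco_v$, each a $G_v$--cocompact product, glued along $\integers^2$--cocompact plane subcomplexes over the JSJ planes. But Sageev's construction does not hand you such a decomposition: cubes of $\cuco$ correspond to families of pairwise crossing walls, the walls are global objects (a lift of a turbine surface runs through a block horizontally \emph{and} through all adjacent blocks vertically), and a priori two walls that both meet $\widetilde B_v$ might cross each other only in some other block, while a pairwise crossing family need not be associated to any single block at all. The content of the paper's proof is exactly the control you skip: the Helly property for subtrees of $\Gamma$ to locate one block met by every wall of a crossing family, Lemma~\ref{lem:flip->far} to show that crossings then occur \emph{inside} that block and that at most one member is horizontal there (ruling out two crossing horizontal lifts and the ``vertical--vertical but crossing elsewhere'' configuration), and then a uniform-proximity argument (quasi-geodesics in the quasi-tree $S^{\mathrm{hor}}\cap\widetilde B_v$, via Sageev's Lemma~3.4) producing a point of $\widetilde M$ uniformly close to all walls of the family. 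Your reduction to a simple cover does give disjointness of horizontal lifts, but nothing in your sketch rules out the cross-block crossing configurations or supplies the uniform proximity, so the claimed product structure and the gluing statement are unsupported, and cocompactness does not follow.

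Your freeness argument is also flawed as reasoned. Exhibiting, for each nontrivial $g$, one wall separating some point from its $g$--translate does not prevent $g$ from fixing a point of $\cuco$; one needs an essentiality/skewering statement (e.g.\ that the number of walls separating $x$ from $g^nx$ grows without bound), which is what the ``sufficient collection'' criterion of Przytycki--Wise packages and what the paper cites. Moreover, the final inference is a non sequitur: stabilizers of maximal families of pairwise crossing walls are typically infinite (e.g.\ $G_e\cong\integers^2$ stabilizes many such families through $\widetilde T_e$), so freeness cannot be deduced from their ``triviality''; what must be shown trivial are vertex stabilizers, i.e.\ elements preserving a consistent choice of halfspaces. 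Also note that your preliminary passage to a finite cover does not help here, since freeness does not pass from a finite-index subgroup to the whole group. Your specialness plan is essentially the paper's (the Haglund--Wise separability criterion), but be aware that the required separability and double-coset separability of wall stabilizers in graph manifolds are themselves nontrivial theorems rather than ``known'' facts reducible to a single block by a routine canonical-completion argument.
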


\begin{lem}
\label{lem:flip->far}
Let $S,S'$ be distinct surfaces in $\widetilde{\mathcal{S}}^\mathrm{eff}$ intersecting a block $\widetilde{B}_v\subset\widetilde{M}$. If $S\cap S'\neq \emptyset$, then $S\cap S'\cap \widetilde{B}_v\neq\emptyset$. Moreover, at most one of $S\cap \widetilde{B}_v,S'\cap \widetilde{B}_v$ is horizontal.
\end{lem}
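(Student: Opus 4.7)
The plan is to exploit the explicit structure of the efficient collection in each block and then to track intersection curves across the Bass--Serre tree $\Gamma$.

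First I would classify $S \cap \widetilde B_v$ for $S \in \widetilde{\mathcal S}^\mathrm{eff}$. Inspecting the construction, the turbine $S^\mathrm{tur}_{\bar v}$ is the only surface in $\mathcal S^\mathrm{eff}$ whose restriction to $B_{\bar v}$ is horizontal (two parallel copies of $S_{\bar v}$); every other turbine $S^\mathrm{tur}_{\bar v''}$ contributes only the vertical annuli $A_{\bar e}$ to $B_{\bar v}$ (for the edge $\bar e = (\bar v, \bar v'')$), and every torus in $\mathcal S^\mathrm{tor}$ or JSJ torus incident to $\bar v$ is vertical in $B_{\bar v}$. Consequently $S \cap \widetilde B_v$ is horizontal precisely when $S$ is an elevation of $S^\mathrm{tur}_{\bar v}$, and otherwise is a disjoint union of vertical strips or vertical planes in $\widetilde B_v$.

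For the moreover, suppose both $S \cap \widetilde B_v$ and $S' \cap \widetilde B_v$ are horizontal. By the classification, $S$ and $S'$ are both connected components of the preimage of $S^\mathrm{tur}_{\bar v}$ in $\widetilde M$. Distinct connected components of a common subset of $\widetilde M$ are disjoint, so $S \cap S' = \emptyset$, contradicting the hypothesis $S \cap S' \neq \emptyset$. Hence at most one of the two intersections is horizontal.

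For the first statement, use the moreover to reduce to the case where $S \cap \widetilde B_v$ is vertical. Pick $p \in S \cap S'$ and let $\widetilde B_w$ be the block containing $p$. Each of $S$ and $S'$ crosses a connected sub-collection of blocks, corresponding to a subtree of $\Gamma$, and both subtrees contain $v$ and $w$; hence both surfaces pass through every block on the geodesic in $\Gamma$ from $w$ to $v$. At each JSJ plane along this geodesic the traces of $S$ and $S'$ are fibers of one of the two adjacent blocks (of the current block in the vertical case, of the neighbor in the horizontal case), and two such families of fibers meet transversely. Starting from $p$ and using the product structure within each block, I would propagate the component of $S \cap S'$ through $p$ across successive JSJ planes into $\widetilde B_v$, yielding $S \cap S' \cap \widetilde B_v \neq \emptyset$.

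The main obstacle is this propagation step: ensuring that the intersection curve through $p$ cannot terminate in a block before reaching $\widetilde B_v$. The key inputs I would lean on are $\pi_1$-injectivity of the surfaces in $\mathcal S^\mathrm{eff}$, so that elevations are planes and $S \cap S'$ is a properly embedded $1$-submanifold of $\widetilde M$ without boundary, together with the fiber-aligned boundary behaviour of $S \cap \widetilde B_u$ on each JSJ plane on the path from $p$ to $\widetilde B_v$, which forces intersection curves to cross from one block into the next.
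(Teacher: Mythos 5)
Your treatment of the ``moreover'' part contains the central gap. You assert that since both $S\cap\widetilde B_v$ and $S'\cap\widetilde B_v$ are horizontal, $S$ and $S'$ are connected components of the preimage of $S^{\mathrm{tur}}_{\bar v}$ in $\widetilde M$ and hence disjoint. But $S^{\mathrm{tur}}_{\bar v}$ is only \emph{immersed} in $M$: if $\overline\Gamma$ has a loop at $\bar v$ or a double edge, the vertical annuli of the turbine cross its horizontal pieces, or annuli attached over distinct edges cross each other in the same adjacent block. Then the set-theoretic preimage of $S^{\mathrm{tur}}_{\bar v}$ in $\widetilde M$ is a union of elevations that may intersect one another; elevations are not connected components of that preimage, and ``distinct components of a common subset are disjoint'' does not apply. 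This is exactly what Remark~\ref{rem:turbine_embed} is in the paper for: one passes to the cover $M'$ induced by the universal cover $\Gamma'\to\overline\Gamma$ (a tree, hence simple), where the turbine lifts to an embedding, and since $\widetilde M\to M$ factors through $M'$, the two elevations through $\widetilde B_v$ are then seen to be disjoint. Without this intermediate-cover step, the disjointness claim --- the one place where the specific design of the turbine collection matters --- is unjustified.

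The first assertion is also not established, and you flag the problem yourself: a component of $S\cap S'$ through a point $p$ in a distant block has no reason to travel along the geodesic toward $\widetilde B_v$; it can close up or remain in finitely many blocks, and $\pi_1$--injectivity and properness do not rule this out. Moreover, your claim that on each JSJ plane along the way the traces of $S$ and $S'$ are transverse fiber families can fail (both may be vertical on the same side of a JSJ plane). The paper avoids any propagation: if one of $S\cap\widetilde B_v$, $S'\cap\widetilde B_v$ is horizontal and the other vertical, they already intersect inside $\widetilde B_v$; and if both are vertical but disjoint in $\widetilde B_v$ while $S\cap S'\neq\emptyset$, then neither surface is confined to $\widetilde B_v$, so both are turbine elevations for neighboring blocks, and the first block $\widetilde B_{v'}$ on the geodesic from $v$ toward a block containing a point of $S\cap S'$ is met \emph{horizontally} by both; the both-horizontal case then forces $S\cap S'=\emptyset$, a contradiction. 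So your argument needs repair in both halves: the horizontal case requires the virtual-embedding input from Remark~\ref{rem:turbine_embed}, and the vertical case should be replaced by this structural, contrapositive argument rather than continuation of an intersection curve.
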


\begin{proof}
If one of $S\cap \widetilde{B}_v,S'\cap \widetilde{B}_v$ is horizontal and the other vertical, then they intersect.

If they are both horizontal, then they cover the same unique surface in $\mathcal{S}^\mathrm{eff}$ that has horizontal intersection with the block of $M$ covered by $\widetilde{B}_v$. By Remark~\ref{rem:turbine_embed} applied to the universal cover $\Gamma'\rightarrow\overline \Gamma$, through which the map $\Gamma\rightarrow \overline{\Gamma}$ factors, the surfaces $S,S'$ are disjoint.

If $S\cap \widetilde{B}_v,S'\cap \widetilde{B}_v$ are both vertical and disjoint, but $S$ and $S'$ intersect, then there is an adjacent block $\widetilde{B}_{v'}$ with both $S\cap \widetilde{B}_{v'},S'\cap \widetilde{B}_{v'}$ nonempty and horizontal. As before, we obtain that $S$ and $S'$ are disjoint, which is a contradiction.
\end{proof}

\begin{proof}[Proof of Proposition~\ref{prop:flip->cocompact}]
The action is free by \cite[Sec~3]{PrzytyckiWise:cube_GraphManifold} since $\mathcal{S}^\mathrm{eff}$ is \emph{sufficient} in the sense of \cite[Def~1.4]{PrzytyckiWise:cube_GraphManifold}.

For cocompactness it suffices to show that, for any collection of pairwise intersecting surfaces $\mathcal{S}\subset \widetilde{\mathcal{S}}^\mathrm{eff}$, there is a point of $\widetilde M$ uniformly close to each $S\in\mathcal{S}$. First note that by the Helly property for trees, there is a block $\widetilde{B}_v\subset \widetilde M$ intersecting all the surfaces in $\mathcal{S}$.
By Lemma~\ref{lem:flip->far} the surfaces $S\cap \widetilde{B}$ for $S\in\mathcal{S}$ pairwise intersect. Moreover, the collection $\mathcal{S}$ contains at most one surface $S^\mathrm{hor}$ with $S^\mathrm{hor}\cap \widetilde{B}_v$ horizontal and we can assume without loss of generality that there is such a surface in $\mathcal{S}$. Then the family of lines $S\cap S^\mathrm{hor}$ with $S\in \mathcal{S}-\{S^\mathrm{hor}\}$ is a family of uniform quasi-geodesics in the quasi-tree $S^\mathrm{hor}\cap \widetilde{B}_v$, pairwise at finite distance. Then, for example by \cite[Lem~3.4]{Sageev97}, there is a point in $S^\mathrm{hor}\cap \widetilde{B}_v$ uniformly close to all these lines.  Hence $G=\pi_1M$ acts cocompactly on $\cuco$.

We verify specialness using \cite[Thm~9.19]{HaglundWise:special}, saying that for a free and cocompact action of a group $G$ on a \textnormal{CAT(0)} cube complex $\cuco$ the following are equivalent:
\begin{enumerate}
 \item $G$ has a finite-index subgroup $G'$ such that $G'\setminus\cuco$ is special.
 \item For each hyperplane $\mathfrak h$ of $\cuco$, the subgroup $\stabilizer \mathfrak h\leq G$ is separable, and for intersecting hyperplanes $\mathfrak h,\mathfrak h'$, the double coset $\stabilizer \mathfrak h\, \stabilizer \mathfrak h'\subset G$ is separable.
\end{enumerate}

Each hyperplane in $\cuco$ has stabilizer $\stabilizer S$ for some $S\in\widetilde{\mathcal S}^\mathrm{eff}.$
Since $\mathcal S^\mathrm{eff}$ is in general position, each $\stabilizer S$ coincides with a conjugate of the fundamental group of a surface in $\mathcal{S}^\mathrm{eff}$. Since the surfaces in $\mathcal{S}^\mathrm{eff}$ are virtually embedded, the groups $\stabilizer S$ are separable~\cite[Thm~1.1]{PrzytyckiWise:cube_GraphManifold}, and double cosets $\stabilizer S\,\stabilizer S'$ are separable for intersecting surfaces $S,S'\in\widetilde{\mathcal S}^\mathrm{eff}$, by~\cite[Thm~1.2]{PrzytyckiWise:cube_GraphManifold}. Hence, by the above criterion, the complex $\pi_1M\backslash \mathcal{X}$ is virtually special.
\end{proof}

\section{Constructing flats from parallel families}\label{sec:flat_construct}
We now begin to establish statements needed in the proof of Theorem~\ref{thmi:cocompact_cubulation}. In this section we give methods of recognizing flats in a CAT(0) cube complex using parallel families, defined below.

\begin{defn}[Parallel family of lines]
Let $\mathcal S$ be a locally finite \emph{multiset} of geodesic lines in the Euclidean plane $\Euclidean^2$, i.e.\ a family of lines that are allowed to occur multiple times. We say that $\mathcal S$ is a \emph{parallel family} if the lines are pairwise parallel and for each $S\in \mathcal S$ each of the two components of $\Euclidean^2-S$ contains another line from $\mathcal S$. We say that $\mathcal S$ \emph{consists of $n$ parallel families} if $\mathcal S$ can be partitioned into $n$ parallel families that are maximal, or equivalently, pairwise transverse.
\end{defn}

\begin{defn}[Combinatorial flat]\label{defn:combinatorial_flat}
A \emph{combinatorial flat} $\mathbf E_n$ is a CAT(0) cube complex that is the standard tiling of $\Euclidean^n$ by unit $n$--cubes.
\end{defn}

\begin{rem}\label{rem:flat}
Let $\cuco$ be the CAT(0) cube complex from Construction~\ref{constr:Sageev}, dual to a collection $\mathcal{S}$ consisting of $n$ parallel families. If none of the lines in $\mathcal{S}$ coincide, then $\cuco$ is a combinatorial $n$--flat.
Otherwise $\cuco$ is the product of $n$ complexes, each of which is built from a sequence $(Q_k)_{k\in \integers}$ of cubes by gluing them along vertices so that the vertex $Q_{k-1}\cap Q_k$ is opposite to $Q_k\cap Q_{k+1}$ in $Q_k$. Each cube $Q_k$ corresponds to a collection of coinciding lines, and $\dimension Q_k$ is equal to the number of such lines.
\end{rem}

We would now like to recognize a combinatorial flat inside an ambient CAT(0) cube complex.

\begin{rem}\label{rem:plane_in_flat}
Any hyperplane $\mathfrak h$ in a CAT(0) cube complex $\cuco$ has a metric neighborhood of the form $\mathfrak h\times[-1,1]$. Hence for any CAT(0) geodesic $\varepsilon$ in $\cuco$ that is neither disjoint from nor contained in $\mathfrak h$, the intersection $\varepsilon\cap\mathfrak h$ is a point separating the intersections of $\varepsilon$ with the two open halfspaces of $\mathfrak h$. Consequently, for any isometrically embedded copy $\mathcal E\subset\cuco$ of $\Euclidean^2$ that is neither disjoint from nor contained in $\mathfrak h$, the intersection $\mathcal E\cap\mathfrak h$ is a line separating the convex intersections of $\mathcal E$ with these halfspaces.
\end{rem}

\begin{defn}[Parallel family of hyperplanes]\label{defn:parallel_family}
Let $\cuco$ be a \textnormal{CAT(0)} cube complex containing an isometrically embedded copy $\mathcal E$ of $\Euclidean^2$.
Let $\mathrm{Ess}(\mathcal E)$ denote the collection of hyperplanes that are neither disjoint from nor contain $\mathcal E$.
We say that $\mathcal E$ \emph{has $n$~parallel families} if the collection of lines $\mathfrak h\cap\mathcal E$, with $\mathfrak h\in\mathrm{Ess}(\mathcal E)$, consists of $n$ parallel families. The family of hyperplanes intersecting $\mathcal E$ along one of these families is called \emph{parallel} as well.
\end{defn}

Note that parallelism of $\mathfrak h\cap\mathcal E,\mathfrak h'\cap\mathcal E$ does not imply that the hyperplanes $\mathfrak h,\mathfrak h'$ are disjoint; these hyperplanes may intersect outside of $\mathcal E$.

Let $\cuco$ be finite-dimensional. If a group $J$ of automorphisms of $\cuco$ acts cocompactly on $\mathcal{E}$, then for each hyperplane $\mathfrak h\in \mathrm{Ess}(\mathcal E)$, the family of lines $\mathfrak h'\cap\mathcal E$, with $\mathfrak h'\in \mathrm{Ess}(\mathcal E)$, parallel to $\mathfrak h\cap\mathcal E$ is a parallel family. Consequently, there exists $n$ such that $\mathcal E$ has $n$ parallel families.

\begin{lem}\label{lem:straight}
Let $\cuco$ be a finite-dimensional \textnormal{CAT(0)} cube complex with a free action of a group $J=\integers^2$. Let $\mathcal E\subset\cuco$ be a $J$--cocompact isometrically embedded copy of $\Euclidean^2$. Then one of the following holds:
\begin{enumerate}
 \item For some $n\geq 3$, and some finite-index subgroup $J'\leq J$, the complex $\cuco$ contains a $J'$--invariant combinatorial $n$--flat $\mathcal Y$ such that $\mathcal Y^{(1)}\subset\cuco^{(1)}$ is isometrically embedded.
 \item The plane $\mathcal E$ has two parallel families.
\end{enumerate}
\end{lem}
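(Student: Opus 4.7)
The plan is to count the parallel families of $\mathcal{E}$. Let $k$ denote this number, well-defined by the paragraph preceding the lemma. The case $k=2$ is exactly conclusion~(2); I need to rule out $k\leq 1$ and to produce, when $k\geq 3$, a combinatorial $n$-flat with $n\geq 3$.

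To rule out $k\leq 1$, I take a bi-infinite CAT(0) geodesic $\gamma\subset\mathcal{E}$ parallel to the unique direction of the single family (or in any direction if $k=0$). No hyperplane of $\mathrm{Ess}(\mathcal{E})$ is transverse to $\gamma$, and any non-essential hyperplane either contains $\mathcal{E}$ (hence contains $\gamma$, not crossed transversally) or is disjoint from $\mathcal{E}$ (hence from $\gamma$). Iterating inside the hyperplanes that contain $\mathcal{E}$, I reduce $\gamma$ to a geodesic in a lower-dimensional sub-cube-complex that crosses no hyperplane; since $\cuco$ is finite-dimensional, this forces $\gamma$ into a single cube of finite diameter, contradicting the infinite length of $\gamma$.

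For $k\geq 3$, I pass to a finite-index subgroup $J'\leq J$ preserving each parallel family $\mathcal{F}_i$ of hyperplanes and acting on $\mathcal{E}$ by translations. For each $i$ I fix a base hyperplane $\mathfrak{h}_i^{(0)}\in\mathcal{F}_i$ and pick $t_i\in J'$ translating $\mathfrak{h}_i^{(0)}$ to the next hyperplane in $\mathcal{F}_i$'s bi-infinite sequence of trace lines; setting $\mathfrak{h}_i^{(m)}=t_i^m\mathfrak{h}_i^{(0)}$ yields such a sequence in each $\mathcal{F}_i$. Any two hyperplanes $\mathfrak{h}_i^{(m)},\mathfrak{h}_j^{(m')}$ with $i\neq j$ cross in $\cuco$, since their traces on $\mathcal{E}$ are transverse. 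By the Helly property for pairwise-crossing hyperplanes, for each $\vec m\in\integers^k$ the $k$-tuple $\mathfrak{h}_1^{(m_1)},\ldots,\mathfrak{h}_k^{(m_k)}$ spans a $k$-cube $C_{\vec m}\subset\cuco$, and adjacent $C_{\vec m}, C_{\vec m+e_i}$ share the $(k-1)$-face determined by the unchanged hyperplanes. Thus the cubes glue into a combinatorial $k$-flat $\mathcal Y\cong\mathbf E_k$, invariant under $\langle t_1,\ldots,t_k\rangle$---a finite-index subgroup of $J\cong\integers^2$ because any two $t_i$ point in independent directions. The isometric embedding $\mathcal Y^{(1)}\subset\cuco^{(1)}$ follows from the standard criterion that each combinatorial path in $\mathcal Y$ crosses each $\mathfrak{h}_i^{(m)}$ at most once.

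The main obstacle will be the assembly step, because (as the paragraph after Definition~\ref{defn:parallel_family} warns) hyperplanes within the same $\mathcal{F}_i$ may cross in $\cuco$ away from $\mathcal{E}$. Such crossings would produce extra cube directions transverse to the model flat, but this can only increase the dimension of the resulting combinatorial flat (still $\geq k\geq 3$), so conclusion~(1) holds in any case. Concretely, I would invoke the product decomposition from Remark~\ref{rem:flat}, showing that the cubes $C_{\vec m}$ glue as expected along $\mathcal{E}$, with any within-family crossings absorbed into transverse product factors.
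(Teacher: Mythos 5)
Your elimination of $k\le 1$ is acceptable in spirit (though you should iterate inside hyperplanes containing $\gamma$, not only those containing $\mathcal E$, since $\gamma$ may coincide with the trace of an essential hyperplane; the paper gets $k\ge 2$ more directly by noting that each complementary component in $\mathcal E$ of the union of trace lines lies in a cubical star, hence is bounded, which is impossible if all traces are parallel). The genuine gap is the assembly step for $k\ge 3$, and your own fallback does not repair it. First, the existence of $t_i\in J'$ carrying $\mathfrak h_i^{(0)}$ to the \emph{adjacent} hyperplane of the family $\mathcal F_i$ is unjustified: $J'$ need not act transitively on a parallel family, so the orbit $\{t_i^m\mathfrak h_i^{(0)}\}$ in general skips members of $\mathcal F_i$. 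Second, and decisively, while any $k$ pairwise-crossing hyperplanes are indeed dual to a common $k$--cube, such a cube $C_{\vec m}$ is not canonical, and $C_{\vec m}$ and $C_{\vec m+e_i}$ need not share a face: every hyperplane of $\mathcal F_i$ whose trace lies strictly between those of $\mathfrak h_i^{(m_i)}$ and $\mathfrak h_i^{(m_i+1)}$ (members of other $J'$--orbits, or members skipped by $t_i$) separates these two hyperplanes, so any cube dual to the first tuple is at positive combinatorial distance from any cube dual to the second. The cubes therefore do not glue into a complex at all, and even a repaired gluing along $\mathcal E$ would not give an isometrically embedded $1$--skeleton, precisely because of the essential hyperplanes you omitted; your criterion ``each path crosses each chosen $\mathfrak h_i^{(m)}$ at most once'' is not the relevant test. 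Appealing to Remark~\ref{rem:flat} does not help: that remark describes the abstract complex dual to a line arrangement, not a subcomplex of $\cuco$, and it says nothing about adjacency of your cubes inside $\cuco$.

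The paper avoids all of this by constructing $\mathcal Y$ globally rather than cube by cube: $\mathcal Y^{(0)}$ is the set of vertices $x$ of $\cuco$ such that $x(\mathfrak h)=\mathcal E(\mathfrak h)$ for every $\mathfrak h\notin\mathrm{Ess}(\mathcal E)$ and $x(\mathfrak h)\cap x(\mathfrak h')\cap\mathcal E\neq\emptyset$ for all $\mathfrak h,\mathfrak h'\in\mathrm{Ess}(\mathcal E)$, using \emph{all} essential hyperplanes, not one orbit per family. The subcomplex spanned by $\mathcal Y^{(0)}$ is then isomorphic to the cube complex dual to the full arrangement of trace lines (so the gluing is automatic), its $1$--skeleton is isometric in $\cuco^{(1)}$ because two points of $\mathcal E$ are separated by $\mathfrak h\cap\mathcal E$ in $\mathcal E$ exactly when they are separated by $\mathfrak h$ in $\cuco$, and it is invariant under the finite-index subgroup $J'$ acting trivially on the hyperplanes containing $\mathcal E$; coinciding trace lines are then handled via Remark~\ref{rem:flat} by passing to a further finite-index subgroup and a $J'$--invariant subflat. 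To salvage your approach you would essentially have to do the same thing: work with the whole of $\mathrm{Ess}(\mathcal E)$ and prove that the resulting vertex set spans the dual complex, which is the paper's argument.
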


\begin{proof}
Denote by $n$ the number of parallel families of $\mathcal E$. We first prove $n\geq 2$.
Each complementary component in $\mathcal E$ of the union of all the hyperplanes in $\mathrm{Ess}(\mathcal E)$ is contained in the cubical star of a vertex. Since each cube is bounded, each such component must be bounded. Hence there are hyperplanes $\mathfrak a,\mathfrak b\in \mathrm{Ess}(\mathcal E)$ with lines $\mathfrak a\cap\mathcal E$ and $\mathfrak b\cap\mathcal E$ intersecting transversely.

We shall now construct a combinatorial $n$--flat $\mathcal Y\subset\cuco$.
For a vertex $x\in\cuco$ and a hyperplane $\mathfrak h$, let $x(\mathfrak h)$ be the closed
halfspace of $\mathfrak h$ containing $x$.
If $\mathfrak h\notin\mathrm{Ess}(\mathcal E)$, so that $\mathcal E$ is contained in one or two of the closed halfspaces of $\mathfrak h$, we define $\mathcal E(\mathfrak h)$ to be one such fixed halfspace.
Consider the set $\mathcal Y^{(0)}$ of vertices $x\in \cuco$ satisfying the following conditions:
\begin{enumerate}
 \item [(a)] For $\mathfrak h\notin\mathrm{Ess}(\mathcal E)$, we have $x(\mathfrak h)=\mathcal E(\mathfrak h)$.
 \item [(b)] For $\mathfrak h,\mathfrak h'\in\mathrm{Ess}(\mathcal E)$, we have $x(\mathfrak h)\cap x(\mathfrak h')\cap\mathcal E\neq\emptyset$.
\end{enumerate}
Let $\mathcal Y$ be the subcomplex spanned by $\mathcal Y^{(0)}$, i.e.\ the union of the closed cubes all of whose vertices are contained in $\mathcal Y^{(0)}$. By condition (a), each vertex $x\in \mathcal Y^{(0)}$ is uniquely determined by the choice of halfspaces $x(\mathfrak h)$ with $\mathfrak h\in\mathrm{Ess}(\mathcal E)$ satisfying condition (b). By condition (b), for all $\mathfrak h$ from the same parallel family we have $x(\mathfrak h)\ni e$ for some $e\in \mathcal E$.
Thus $\mathcal Y$ is isomorphic to the CAT(0) cube complex dual to the intersection lines in $\mathcal E$ of $\mathrm{Ess}(\mathcal E)$. Hence $\mathcal Y$ is a combinatorial $n$--flat or possibly a complex described in Remark~\ref{rem:flat}.
A pair of points $e,e'\in \mathcal E$ is separated by $\mathfrak h\cap\mathcal E$ in $\mathcal E$ if and only if it is separated by $\mathfrak h$ in $\cuco$.
Hence $\mathcal Y^{(1)}\subset\cuco^{(1)}$ is isometrically embedded.

Let $J'$ be the finite-index subgroup of $J$ that acts trivially on the finite set of hyperplanes containing $\mathcal E$. The complex $\mathcal Y$ is $J'$--invariant since $J$ preserves parallel families and $J'$ preserves the collection of $\mathcal E(\mathfrak h)$. In the case where $\mathcal Y$ is not a combinatorial $n$--flat, we replace $J'$ with a further finite-index subgroup whose elements do not map a hyperplane in $\mathrm{Ess}(\mathcal E)$ to a distinct hyperplane with the same intersection line with $\mathcal E$. We can then replace $\mathcal Y$ with a $J'$--invariant subcomplex that is a combinatorial $n$--flat isometrically embedded on $1$--skeleton. If $n\geq 3$, case~(1) of the lemma is verified. Otherwise, we have $n=2$ and hence case~(2).
\end{proof}

The following complements Lemma~\ref{lem:straight}.

\begin{defn}[Combinatorial half-plane]\label{defn:comb_half_plane}
Let $\mathbf E_2$ be a combinatorial $2$--flat with a bi-infinite combinatorial geodesic $\gamma\subset\mathbf E_2^{(1)}$. The CAT(0) cube complex that is the closure of a component of $\mathbf E_2-\gamma$ is a \emph{combinatorial half-plane bounded by $\gamma$}.
\end{defn}

\begin{lem}\label{lem:non_cocompact_implies_E3}
Let $\cuco$ be a finite-dimensional \textnormal{CAT(0)} cube complex with a free action of a group $J=\integers^2$. Let $\mathcal E\subset\cuco$ be a $J$--cocompact isometrically embedded copy of $\Euclidean^2$ that has two parallel families. Then one of the following holds:
\begin{enumerate}
 \item For some finite-index subgroup $J'\leq J$, the complex $\cuco$ contains a $J'$--invariant subcomplex $\mathcal F\times\mathbf E_1$ where $\mathcal F$ is a combinatorial half-plane, such that $(\mathcal F\times\mathbf E_1)^{(1)}\subset\cuco^{(1)}$ is isometrically embedded.
 \item There exists a $J$--cocompact convex subcomplex $\widehat{\mathcal Y}\subset\cuco$.
\end{enumerate}
\end{lem}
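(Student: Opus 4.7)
As in the proof of Lemma~\ref{lem:straight}, construct a $J'$-invariant combinatorial $2$-flat $\mathcal{Y}\subset\cuco$ with $\mathcal{Y}^{(1)}$ isometrically embedded in $\cuco^{(1)}$, where $J'\leq J$ has finite index. Let $\widehat{\mathcal{Y}}$ denote the convex hull of $\mathcal{Y}$ in $\cuco$; it is a $J'$-invariant convex subcomplex whose hyperplanes coincide with those in $\mathrm{Ess}(\mathcal{E})$. If $\widehat{\mathcal{Y}}$ is $J'$-cocompact, then since $[J:J']<\infty$ it is $J$-cocompact, yielding case~(2) of the lemma.

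Suppose instead that $\widehat{\mathcal{Y}}$ is not $J'$-cocompact. After passing to a further finite-index subgroup, choose generators $t_1,t_2$ of $J'$ such that $t_1$ permutes the $\mathcal{A}$-family while stabilizing each $\mathcal{B}$-hyperplane setwise, and $t_2$ does the opposite. The complex $\widehat{\mathcal{Y}}$ has finitely many $J'$-orbits of hyperplanes, and pairs of hyperplanes from distinct parallel families always cross, contributing only finitely many orbits of $2$-cubes; thus non-cocompactness forces infinitely many $J'$-orbits of $2$-cubes to arise from crossings within a single parallel family. After possibly swapping $\mathcal{A}$ and $\mathcal{B}$, there exist $\mathfrak{a}\in\mathcal{A}$ and positive integers $k_1<k_2<\ldots$ with each $t_1^{k_i}\mathfrak{a}$ crossing $\mathfrak{a}$ in $\cuco$.

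From this infinite family of crossings we build $\mathcal{F}\times\mathbf{E}_1$. The $\mathbf{E}_1$-factor is the $\langle t_2\rangle$-orbit along the line $\mathfrak{a}\cap\mathcal{E}$, which is well-defined because $t_2$ stabilizes $\mathfrak{a}$ setwise. The half-plane $\mathcal{F}$ has boundary geodesic $\gamma\subset\mathfrak{a}\cap\mathcal{E}$, and extends transversely into $\cuco$ across a carefully chosen subsequence of $\{t_1^n\mathfrak{a}:n\geq 0\}$ that supplies the ``row'' family parallel to $\gamma$, together with a $\langle t_2\rangle$-orbit of a single $\mathcal{B}$-hyperplane supplying the ``column'' family perpendicular to $\gamma$. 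After passing to a further finite-index subgroup $J''\leq J'$ preserving these choices, we obtain a $J''$-invariant subcomplex $\mathcal{F}\times\mathbf{E}_1\subset\cuco$ with $1$-skeleton isometrically embedded in $\cuco^{(1)}$.

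The main obstacle is isolating a subsequence of $\{t_1^n\mathfrak{a}\}$ whose hyperplanes, together with the chosen $\mathcal{B}$-columns, produce precisely a combinatorial half-plane shape (two pairwise-disjoint hyperplane families, one bi-infinite and one half-infinite, with the correct Sageev dual structure) rather than a strip, staircase, or other $2$-dimensional configuration. This requires a careful analysis of the combinatorics of crossings within $\mathcal{A}$ and verification that the chosen subfamily is compatible with the $\langle t_2\rangle$-product structure so that the resulting dual is isomorphic to $\mathbf{E}_2$ cut by a bi-infinite combinatorial geodesic.
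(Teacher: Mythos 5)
Your overall framing matches the paper's: take the combinatorial $2$--flat $\mathcal Y$ from Lemma~\ref{lem:straight}, pass to its cubical convex hull $\widehat{\mathcal Y}$, and argue a dichotomy between cocompactness of the hull and the presence of $\mathcal F\times\mathbf E_1$. But the heart of the argument is exactly the step you defer: turning ``crossings within a single parallel family at unbounded distances'' into an actual combinatorial half-plane with isometrically embedded $1$--skeleton, rather than a staircase or some other configuration. The paper isolates this as a separate statement (Lemma~\ref{lem:far}): for $g$ translating a combinatorial geodesic $\gamma$, either crossing hyperplanes meet $\gamma$ within a uniform distance $R$, or one can \emph{partition all} hyperplanes crossing $\gamma$ into two families $\mathfrak A\supset\{g^{mi}\mathfrak a\}$, $\mathfrak B\supset\{g^{mj}\mathfrak b\}$ such that every pair $\mathfrak h\in\mathfrak A$, $\mathfrak h'\in\mathfrak B$ with $\mathfrak h<\mathfrak h'$ crosses; the half-plane is then built explicitly, its vertices being indexed by ordered pairs $(x,x')$ of vertices of $\gamma$, with $p(\mathfrak h)=x(\mathfrak h)$ for $\mathfrak h\in\mathfrak A$ and $p(\mathfrak h)=x'(\mathfrak h)$ for $\mathfrak h\in\mathfrak B$. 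This partition argument (using finite dimensionality to get $g^{\pm m}\mathfrak h$ disjoint from $\mathfrak h$, the ordering along $\gamma$, and a limiting/pigeonhole argument to show each hyperplane eventually crosses all far translates on one side) is the genuinely nontrivial content, and your proposal acknowledges in its last paragraph that it is missing. As written, the proof is therefore incomplete at its decisive step. Note also that the structure you describe for $\mathcal F\times\mathbf E_1$ is off: the $\mathbf E_1$ factor should be dual to the \emph{other} parallel family (the $\beta$--direction of $\mathcal Y=\alpha\times\beta$), while \emph{both} hyperplane families of the half-plane $\mathcal F$ come from hyperplanes crossing $\alpha$, i.e.\ from within a single parallel family; taking a $\langle t_2\rangle$--orbit of a $\mathcal B$--hyperplane as the ``columns'' of $\mathcal F$ does not give a subcomplex bounded by a geodesic in the hull of $\alpha$.

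Two smaller gaps: in the cocompact case, $\widehat{\mathcal Y}$ is only $J'$--invariant a priori, and ``$J'$--cocompact implies $J$--cocompact'' does not address invariance; the paper fixes this by relaxing condition~(a) of Lemma~\ref{lem:straight} for hyperplanes containing $\mathcal E$, obtaining a $J$--invariant convex subcomplex of the form $\widehat{\mathcal Y}\times I^k$. Also, your reduction ``non-cocompactness forces crossings within one family at unbounded distance'' is stated but not proved; in the paper this is the contrapositive of the easy direction: if all crossing pairs meet $\mathcal Y$ within distance $R$, then $\widehat{\mathcal Y}$ has finitely many $J'$--orbits of vertices. That direction should be argued explicitly (it is where convexity of the hull and the bound $R$ enter), and it is cleaner to run the dichotomy separately in each factor $\widehat\alpha$, $\widehat\beta$ of $\widehat{\mathcal Y}=\widehat\alpha\times\widehat\beta$, as the paper does, than to argue on orbits of $2$--cubes in the whole hull.
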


We first reduce the proof of Lemma~\ref{lem:non_cocompact_implies_E3} to the following.

\begin{lem}\label{lem:far}
Let $\cuco$ be a finite-dimensional \textnormal{CAT(0)} cube complex and let $g\in\Aut(\cuco)$ act by a translation on a bi-infinite combinatorial geodesic $\gamma\subset \cuco^{(1)}$.  Then one of the following holds:
\begin{enumerate}

 \item There exists $m\geq 1$ and a $g^m$--invariant combinatorial half-plane $\mathcal F\subset \cuco$, bounded by $\gamma$ and contained in its cubical convex hull, such that $\mathcal F^{(1)}\subset\cuco^{(1)}$ is isometrically embedded.
 \item There exists $R$ such that if intersecting hyperplanes $\mathfrak h,\mathfrak h'$ intersect $\gamma$, then they intersect it in points at distance $\leq R.$
\end{enumerate}
\end{lem}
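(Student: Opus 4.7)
I will prove the contrapositive: assuming (2) fails, I will construct a $g^m$-invariant combinatorial half-plane as in (1). Failure of (2) produces a sequence of intersecting pairs of hyperplanes $(\mathfrak h_n,\mathfrak h'_n)$ crossing $\gamma$ at points whose $\gamma$-distance tends to $\infty$. Since $g$ translates $\gamma$ by some length $\tau\geq 1$, it has $\tau$ orbits on the hyperplanes crossing $\gamma$. Two successive pigeonhole reductions, together with translation by powers of $g$, reduce us to the case of a single pair of the form $(\mathfrak a,g^{n_k}\mathfrak b)$ for fixed hyperplanes $\mathfrak a,\mathfrak b$ crossing $\gamma$ and a sequence $n_k\to\infty$, with $\mathfrak a\cap g^{n_k}\mathfrak b\neq\emptyset$ for all $k$.

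Next, for each $k$ set $\mathfrak b_k=g^{n_k}\mathfrak b$ and extract a combinatorial disk diagram $\mathcal D_k\subset\cuco$ as follows. Let $Q_k\subset\cuco$ be a square realizing $\mathfrak a\cap\mathfrak b_k$, and let $y_k$ be its corner in the halfspace intersection $\mathfrak a^+\cap\mathfrak b_k^-$; this intersection also contains the vertex $v_a^+$ of $e_a=\mathfrak a\cap\gamma$ and the vertex $v_{b_k}^-$ of $e_{b_k}=\mathfrak b_k\cap\gamma$. Choosing cubical geodesics $\alpha_k$ from $y_k$ to $v_a^+$ and $\beta_k$ from $y_k$ to $v_{b_k}^-$, both stay in $\mathfrak a^+\cap\mathfrak b_k^-$ by convexity of halfspaces. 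Together with the subarc $\gamma_k\subset\gamma$ from $v_a^+$ to $v_{b_k}^-$ they bound the desired $\mathcal D_k$. Sageev's standard dual-curve analysis of cubical disk diagrams then implies that each of the $d_\gamma(e_a,e_{b_k})-1$ intermediate hyperplanes dual to an edge of $\gamma_k$ must have its dual curve in $\mathcal D_k$ exit through $\alpha_k\cup\beta_k$, since $\gamma$ is a geodesic and no hyperplane can cross it twice. This produces a ``ceiling'' of edges of $\cuco$ lying directly above $\gamma_k$.

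The final step is to assemble the $\mathcal D_k$ into the half-plane. Passing to a power $g^m$ preserving the relevant finite orbit data, the diagrams $\mathcal D_k$ and their $\langle g^m\rangle$-translates glue into a $g^m$-invariant combinatorial strip of height $1$ above $\gamma$, contained in the cubical convex hull of $\gamma$, with $1$-skeleton isometrically embedded in $\cuco^{(1)}$. Its top boundary $\gamma_1$ is a new $g^m$-invariant bi-infinite combinatorial geodesic to which the failure of (2) is inherited, because the hyperplanes $\mathfrak a,\mathfrak b_k$ still cross $\gamma_1$ at the same $\gamma$-distances. Iterating then produces strips $\gamma\times[\ell,\ell+1]$ for every integer $\ell\geq 0$, whose union is the required half-plane $\mathcal F=\gamma\times[0,\infty)$, bounded by $\gamma$ and contained in its cubical convex hull.

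The main obstacle will be the assembly step: verifying that the disk diagrams glue coherently into a bona-fide combinatorial strip with isometrically embedded $1$-skeleton. Potential pathologies — such as a hyperplane of $\cuco$ crossing the candidate strip in more than one connected component, or the diagrams $\mathcal D_k$ having incompatible ``heights'' that obstruct producing a strip of height exactly $1$ — must be ruled out. This requires a careful dual-pair analysis of each $\mathcal D_k$ together with orbit-counting arguments, exploiting the finite-dimensionality of $\cuco$ to bound the number of pairwise-crossing hyperplanes that can appear in any single cube.
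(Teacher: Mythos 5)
Your opening reduction is essentially the paper's first step and is fine (though note that for later use you want $\mathfrak a$ to cross $g^{n}\mathfrak b$ for \emph{all} large $n$, not just a subsequence; this follows from the observation that a hyperplane lying between two crossing hyperplanes along $\gamma$ must cross one of them, together with the finite-dimensionality fact that $\mathfrak h$ separates $g^{\pm m}\mathfrak h$ for some $m$). The genuine gap is the construction of the half-plane, and it is not a technical assembly issue but a wrong target: you aim at a product strip ``of height $1$ directly above $\gamma$'' and ultimately at $\mathcal F=\gamma\times[0,\infty)$. If such a strip is a subcomplex of $\cuco$, its vertical edges are all parallel through squares of the strip, hence all dual to a single hyperplane of $\cuco$, and that hyperplane crosses every hyperplane dual to an edge of $\gamma$; the half-plane $\gamma\times[0,\infty)$ would require infinitely many such hyperplanes. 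The failure of (2) gives nothing of the kind. Concretely, let $\cuco=\mathbf E_2$ be the standard square tiling of $\Euclidean^2$, let $\gamma$ be a staircase geodesic (alternating horizontal and vertical edges) and $g$ the diagonal translation: every vertical hyperplane crosses every horizontal one, so (2) fails, and case (1) holds with $\mathcal F$ the closure of a component of $\mathbf E_2-\gamma$; but no hyperplane of $\mathbf E_2$ crosses all hyperplanes dual to $\gamma$, so no strip $\gamma\times[0,1]$ of your form exists and the iteration never starts. Relatedly, the dual-curve analysis of $\mathcal D_k$ constrains the diagram, not the ambient complex, so it does not produce a ``ceiling'' of edges of $\cuco$ above $\gamma_k$, and the claim that the top of the strip is a geodesic of $\cuco$ inheriting the failure of (2) is also unsupported.

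The paper's proof takes a different route at exactly this point. Using the crossing pair $\mathfrak a<\mathfrak b$, it partitions \emph{all} hyperplanes crossing $\gamma$ into two families $\mathfrak A\supset\{g^{mi}\mathfrak a\}$ and $\mathfrak B\supset\{g^{mj}\mathfrak b\}$ so that every $\mathfrak A$--hyperplane crosses every $\mathfrak B$--hyperplane lying ahead of it along $\gamma$. The half-plane is then defined by halfspace selection: to each ordered pair $p=(x,x')$ of vertices of $\gamma$ one assigns the vertex of $\cuco$ choosing $x(\mathfrak h)$ for $\mathfrak h\in\mathfrak A$ and $x'(\mathfrak h)$ for $\mathfrak h\in\mathfrak B$; consistency of these choices is exactly the crossing property of the partition. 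The resulting $\mathcal F$ is $g^m$--invariant, bounded by $\gamma$ (the pairs $x=x'$), contained in the cubical convex hull of $\gamma$, and isometrically embedded on $1$--skeleta --- but it is not a product over $\gamma$: its two families of hyperplanes are the members of $\mathfrak A$ and $\mathfrak B$ themselves, all of which cross $\gamma$. If you want to salvage your approach, you would need to replace the product-strip picture by a construction of this kind.
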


Here the \emph{cubical convex hull}  $\widehat{\mathcal Y}$ of a subcomplex $\mathcal Y$ of a CAT(0) cube complex $\cuco$ is the subcomplex of $\cuco$ spanned by the vertices that are not separated from $\mathcal Y^{(0)}$ by any hyperplane.

\begin{proof}[Proof of Lemma~\ref{lem:non_cocompact_implies_E3}]
Let $\mathcal Y$ be the combinatorial $2$--flat $\alpha \times\beta$ from Lemma~\ref{lem:straight}, where the factors come from the two parallel families.
We choose embeddings of the combinatorial geodesics $\alpha, \beta$ into $\mathcal Y$ with arbitrary fixed coordinates. As before, let $J'$ be a finite-index subgroup of $J$ preserving $\mathcal Y$. Consider the $J'$--invariant cubical convex hull $\widehat{\mathcal Y}$ of $\mathcal Y$, which is the product of the cubical convex hulls $\widehat\alpha$ and $\widehat\beta$ of $\alpha,\beta$. Let $\stabilizer_{J'} \alpha =\langle a \rangle, \ \stabilizer_{J'} \beta=\langle b \rangle$ and note that $\langle a,b \rangle \leq J'$ has finite index.

By Lemma~\ref{lem:far}, there are two possibilities. The first is that one of the subcomplexes $\widehat\alpha$ or $\widehat\beta$ contains a combinatorial half-plane $\mathcal F$, bounded by $\alpha$ or $\beta$, whose $1$--skeleton is isometrically embedded in that of $\widehat\alpha$ or $\widehat\beta$. Moreover, $\mathcal{F}$ is $a^m$-- or $b^m$--invariant for some $m>0$. Then after possibly replacing $J'$ with a further finite-index subgroup, we have that $\widehat{\mathcal Y}$ contains a $J'$--invariant subcomplex $\mathcal F\times\mathbf E_1$ isometrically embedded on $1$--skeleton. This verifies case~(1) of the lemma. The second possibility is that there exists $R$ such that any pair of intersecting hyperplanes from $\mathrm{Ess}(\mathcal E)$ intersects $\mathcal{Y}$ in lines at distance $\leq R$. Then $\widehat{\mathcal Y}$ contains finitely many $J'$--orbits of vertices, and hence $J'$ acts cocompactly on $\widehat{\mathcal Y}$.

In order to find a subcomplex that is $J$--invariant, we relax condition (a) from the proof of Lemma~\ref{lem:straight}, so that $x(\mathfrak h)$ can be arbitrary for $\mathfrak h$ containing $\mathcal E$. The cubical convex hull of such a set of vertices has the form $\widehat{\mathcal Y}\times I^k$, where $k$ is the number of such $\mathfrak h$, hence satisfies case~(2) of the lemma.
\end{proof}

The following completes the proof of Lemma~\ref{lem:non_cocompact_implies_E3}.

\begin{proof}[Proof of Lemma~\ref{lem:far}]
All the hyperplanes discussed in this proof are assumed to intersect $\gamma$.
Finite-dimensionality implies that there exists $m\geq 1$ such that any hyperplane $\mathfrak h$ is disjoint from and separates $g^{\pm m}\mathfrak h$. We order the hyperplanes so that $\mathfrak h<\mathfrak h'$ if the direction of the subpath of $\gamma$ from $\mathfrak h\cap \gamma$ to $\mathfrak h'\cap \gamma$ agrees with the direction of the translation $g$. Note that if $\mathfrak h<\mathfrak h''$ intersect, then each $\mathfrak h<\mathfrak h'<\mathfrak h''$ intersects $\mathfrak h$ or $\mathfrak h''$. In particular, if $\mathfrak h''=g^{mj}\mathfrak h'$, then $\mathfrak h'$ intersects $\mathfrak h$.

If case~(2) of the lemma does not hold, then there exist intersecting hyperplanes $\mathfrak h<\mathfrak h'$ with $\mathfrak h\cap \gamma,\ \mathfrak h'\cap \gamma$ at arbitrary distance. Since there are finitely many $\langle g^m\rangle$--orbits of hyperplanes, there exist hyperplanes $\mathfrak a<\mathfrak b$ such that $g^{-mi}\mathfrak a$ intersects $g^{mj}\mathfrak b$ for infinitely many $i,j\geq 0$. By the previous paragraph, $g^{-mi}\mathfrak a$ intersects $g^{mj}\mathfrak b$ for all $i,j\geq 0$.

We shall now partition the set of hyperplanes into two families $\mathfrak A\supset \{g^{mi}\mathfrak a\},\ \mathfrak B\supset\{g^{mj}\mathfrak b\}$, with $i,j\in \integers$, such that any $\mathfrak h\in\mathfrak A$ and $\mathfrak h'\in\mathfrak B$ satisfying $\mathfrak h<\mathfrak h'$ intersect.
For every hyperplane $\mathfrak h$, for sufficiently large $i,j\geq 0$ we have $g^{-mi}\mathfrak a<\mathfrak h<g^{mj}\mathfrak b$. Observe that if $\mathfrak h$ intersects $g^{mj}\mathfrak b$ and $g^{mj''}\mathfrak b$ for some $j<j''$, then $\mathfrak h$ intersects all $g^{mj'}\mathfrak b$ with $j< j'< j''$. Hence $\mathfrak h$ intersects all $g^{-mi}\mathfrak a$ for $i$ sufficiently large or all $g^{mj}\mathfrak b$ for $j$ sufficiently large. If the former property holds, or if both properties hold simultaneously, then let $\mathfrak h\in \mathfrak B$; otherwise let $\mathfrak h\in \mathfrak A$
and note that then $\mathfrak h$ is disjoint from all $g^{-mi}\mathfrak a$ for $i$ sufficiently large. Now for any $\mathfrak h\in\mathfrak A$ and $\mathfrak h'\in\mathfrak B$ satisfying $\mathfrak h<\mathfrak h'$, there is $i\geq 0$ with
$g^{-mi}\mathfrak a<\mathfrak h$. After possibly increasing $i$, the hyperplane $\mathfrak h'$ intersects $g^{-mi}\mathfrak a$, but $\mathfrak h$ does not intersect $g^{-mi}\mathfrak a$, whence $\mathfrak h$ intersects $\mathfrak h'$, as required.

Let $p=(x,x')$ be a pair of vertices of $\gamma$ such that either $x=x'$ or the direction of the subpath $xx'\subset \gamma$ agrees with the direction of the translation $g$. For every hyperplane $\mathfrak h$ we choose the halfspace $p(\mathfrak h)$ to be $x(\mathfrak h)$ if $\mathfrak h\in \mathfrak A$ or $x'(\mathfrak h)$ if $\mathfrak h\in \mathfrak B$. By the previous paragraph, any such halfspaces have non-empty intersection, and hence define a vertex in the cubical convex hull of $\gamma$. The set $\mathcal F^{(0)}$ of such vertices is $g^m$--invariant. Let $\mathcal F$ be the subcomplex spanned by $\mathcal F^{(0)}$. The $1$--skeleton of $\mathcal F$ is obtained by adding edges between vertices determined by $p$ that differ by moving $x$ through a hyperplane in $\mathfrak A$ or $x'$ through a hyperplane in $\mathfrak B$. The squares of $\mathcal F$ are obtained by performing two such moves simultaneously. It is easy to see that $\mathcal F$ is a combinatorial half-plane bounded by $\gamma$ and that $\mathcal F^{(1)}\subset \cuco^{(1)}$ is isometrically embedded.
\end{proof}

\section{Cocompactly cubulated graph manifolds}\label{sec:cocompact_implies_flip}
In this section, we prove Theorem~\ref{thmi:cocompact_cubulation}. As in Section~\ref{sec:flip_implies_cocompact}, we denote by $M$ a graph manifold with universal cover $\widetilde M$ and Basse--Serre tree $\Gamma$. We pull back a fixed Riemannian metric on $M$ to $\widetilde{M}$.  Finally, we assume that $G=\pi_1 M$ acts freely and cocompactly on a CAT(0) cube complex $\cuco$.

\subsection{Straight subgroups}

\begin{lem}\label{lem:no_E3}
The inclusion $\widetilde T_e\subset\widetilde M$ does not extend to a $G_e'$--equivariant quasi-isometric embedding $\phi\colon\widetilde T_e\times[0,\infty)\rightarrow\widetilde M$ for any finite-index subgroup $G_e'\leq G_e$.
\end{lem}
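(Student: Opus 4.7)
The plan is to argue by contradiction. Suppose $\phi$ exists with quasi-isometry constants $(K,C)$, and let $S$ be a finite symmetric generating set of $G_e'$. Since $G_e'$ acts cocompactly on $\widetilde T_e$, the quantity $\sup_{x\in\widetilde T_e,\,g\in S}d_{\widetilde T_e}(x,gx)$ is finite, so the equivariance of $\phi$ gives, for every $(x,t)\in\widetilde T_e\times[0,\infty)$ and every $g\in S$,
$$d_{\widetilde M}\bigl(\phi(x,t),\,g\cdot\phi(x,t)\bigr)=d_{\widetilde M}\bigl(\phi(x,t),\,\phi(gx,t)\bigr)\leq K\cdot d_{\widetilde T_e}(x,gx)+C\leq D$$
for a uniform constant $D$. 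Thus every point of the image of $\phi$ has $S$-displacement at most $D$ in $\widetilde M$.

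The first step is to show that this forces $\phi(\widetilde T_e\times[0,\infty))\subset N_R(\widetilde T_e)$ for some uniform $R$. Confinement is a coarse statement, so I would work inside each block using its $\hyperbolic^2\times\reals$ geometry. After passing to a finite-index subgroup of $G_e'$, one may choose generators $g_1,g_2$ whose images in the base orbifolds of $\widetilde B_v$ and $\widetilde B_{v'}$ respectively are translations along the boundary geodesics of $\widetilde T_e$. In $\widetilde B_v$, the $g_1$-displacement of a point at distance $s$ from $\widetilde T_e$ is bounded below by a function growing linearly in $s$ (this is the standard hyperbolic estimate $\cosh d \sim \cosh\ell\,\cosh^2 s$ for a hyperbolic translation), so bounded $g_1$-displacement forces bounded distance to $\widetilde T_e$ within $\widetilde B_v$; symmetrically for $g_2$ in $\widetilde B_{v'}$. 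For a point $y$ in a block $\widetilde B_w$ with $w\notin\{v,v'\}$, each $g_i$ fixes $v,v'$ in the Bass--Serre tree but moves $w$, so any geodesic from $y$ to $g_iy$ enters and leaves $\widetilde B_v\cup\widetilde B_{v'}$; hence $d_{\widetilde M}(y,g_iy)\geq 2\,d_{\widetilde M}\bigl(y,\widetilde B_v\cup\widetilde B_{v'}\bigr)$. Combined with the previous case applied to a nearest-point projection of $y$ onto $\widetilde B_v\cup\widetilde B_{v'}$, this yields the bound $d_{\widetilde M}(y,\widetilde T_e)\leq R$.

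To finish, the inclusion $\widetilde T_e\hookrightarrow\widetilde M$ is a quasi-isometric embedding, so the tube $N_R(\widetilde T_e)$ is quasi-isometric to $\widetilde T_e\cong\Euclidean^2$, a space of polynomial volume growth of degree $2$. On the other hand $\widetilde T_e\times[0,\infty)$ has polynomial volume growth of degree $3$, and volume growth cannot strictly increase under a quasi-isometric embedding between uniformly locally finite geodesic spaces. The existence of $\phi$ is therefore absurd. I expect the main obstacle to be the confinement step: a single generator of $G_e'$ might act as a pure fiber translation in one of the incident blocks and so have constant displacement on all of that block, giving no control on the distance to $\widetilde T_e$ there. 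This is precisely why we pass to a finite-index subgroup and work with the pair $g_1,g_2$, together picking out the hyperbolic translation direction in each of the two blocks incident to $\widetilde T_e$.
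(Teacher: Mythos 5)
Your overall strategy (confine the image of $\phi$, then get a contradiction from a coarse invariant) is sound and genuinely different from the paper's in both halves, but the confinement step has a concrete gap exactly where you suspected trouble. For a point $y$ in a block $\widetilde B_w$ with $w\notin\{v,v'\}$ you assert that ``each $g_i$ fixes $v,v'$ in the Bass--Serre tree but moves $w$,'' and deduce $d_{\widetilde M}(y,g_iy)\geq 2\,d_{\widetilde M}(y,\widetilde B_v\cup\widetilde B_{v'})$. This is false in general: although $g_1$ was chosen non-fiber in $B_{v}$, it may well be (a power of) the fiber of $B_{v'}$ --- this is precisely what happens at a flip gluing, i.e.\ in the basic chargeless examples --- and then $g_1$ lies in the central fiber subgroup $\integers_{v'}\leq G_{v'}$, hence in \emph{every} edge group $G_{e'}$ with $e'$ incident to $v'$. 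Its fixed subtree is therefore the whole star of $v'$, not just the edge $e$, so $g_1$ fixes many vertices $w\neq v,v'$, and even when $g_i$ does move $w$ the separation argument only forces a path from $y$ to $g_iy$ through $\widetilde B_u$, where $u$ is the projection of $w$ to the fixed subtree of $g_i$ and need not lie in $\{v,v'\}$. Your closing remark addresses the fiber issue only inside the two incident blocks; it does not notice that the same phenomenon invalidates the tree argument for the remaining blocks. The step is repairable, but it requires genuine extra work: identify the fixed subtrees (they are contained in the stars of $v'$ resp.\ $v$, because fibers of adjacent Seifert pieces are never parallel), use projections to these subtrees to get closeness to some block adjacent to $v$ or $v'$, and then combine the two generators (plus the observation that a short path meeting two such blocks must cross $\widetilde T_e$) to reach $\widetilde T_e$. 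A secondary, fixable omission: your hyperbolic displacement estimate lives in the intrinsic metric of a block (coarsely $\hyperbolic^2\times\reals$), whereas you need a lower bound on displacement in $d_{\widetilde M}$; this uses the undistortion of blocks and JSJ planes in $\widetilde M$, which should at least be cited.

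For comparison, the paper sidesteps all of this fine displacement analysis: it confines the image of $\phi$ only to a uniform neighborhood of $\widetilde B_v\cup\widetilde B_{v'}$, via a soft connectivity argument using a single $g\in G_e'$ non-central in $G_v$ (such $g$ stabilizes no edge at $v$ other than $e$, so $\phi(t,s)$ and $\phi(gt,s)$ would lie in different complementary components of a neighborhood of $\widetilde B_v$, contradicting connectedness of the image of a path in $\widetilde T_e\times\{s\}$), and then obtains the contradiction not from volume growth in a tube around $\widetilde T_e$ but from topological dimension of asymptotic cones: the cone of $\widetilde T_e\times[0,\infty)$ is $\Euclidean^2\times[0,\infty)$ of dimension $3$, while the cone of $\widetilde B_v\cup\widetilde B_{v'}$ is two copies of ($\reals$--tree$)\times\Euclidean^1$ glued along $\Euclidean^2$, of dimension $2$. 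Your packing/growth endgame would be fine if the strong confinement to a neighborhood of $\widetilde T_e$ were actually proved, but as written that step does not go through.
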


\newcommand{\cone}{\mathbf{Cone}}

\begin{proof}
Let $e=(v,v')$. If such $\phi$ exists, then let $L$ be its additive constant, so that the $L$--neighborhood of the image under $\phi$ of any path-connected set is path-connected. We first show that for any such $\phi$ and for any $s\in [0,\infty)$ there is $t\in \widetilde T_e$ with $\phi(t,s)\in \neb_L(\widetilde B_v\cup \widetilde B_{v'})$. Otherwise, fix any $t\in \widetilde T_e$. Without loss of generality $\phi(t,s)$ lies in a component of $\widetilde M-\neb_L(\widetilde B_v)$ that does not contain $\widetilde B_{v'}$. Let $g\in G_e'$ be non-central in $G_v$. Then $g$ does not stabilize any of the edges of $\Gamma$ incident to $v$ except for $e$. Thus $\phi(t,s)$ and $\phi(gt,s)$ lie in different components of $\widetilde M-\neb_L(\widetilde B_v)$. Consider any path $\tau$ in $\widetilde T_e$ joining $t$ with $gt$. Then $\neb_L(\phi(\tau,s))$ joins two different components of $\widetilde M-\widetilde B_v$ but is disjoint from $\widetilde B_v$. Thus $\neb_L(\phi(\tau,s))$ is disconnected, which is a contradiction. Since $G'_e$ acts cocompactly on $\widetilde T_e$, it follows that the entire image of $\phi$ is contained in a uniform neighborhood of $\widetilde B_v\cup \widetilde B_{v'}$.

The quasi-isometric embedding $\phi$ induces a bi-Lipschitz embedding of asymptotic cones $\cone(\widetilde T_e\times[0,\infty))\rightarrow\cone(\widetilde B_v\cup \widetilde B_{v'})$. Here we choose an arbitrary ultrafilter, scaling constants, and observation points in $\widetilde T_e\times\{0\}$. Then $\cone(\widetilde T_e\times[0,\infty))=\Euclidean^2\times[0,\infty)$ which has topological dimension $3$. On the other hand $\cone(\widetilde B_v\cup \widetilde B_{v'})$ is obtained by gluing two copies of the product of an $\reals$--tree with $\Euclidean^1$ along $\Euclidean^2$; the result has dimension $2$, a contradiction.
\end{proof}

\begin{defn}[Essential hyperplane, essential core]\label{defn:essential_hyperplane_core}
Let $Y$ be a subspace of a metric space $X$. We say that $Z\subset X$ is \emph{$Y$--essential} if $Z$ is not contained in any $k$--neighborhood $\neb_k(Y)$ of $Y$.

Let $\mathcal V$ be a subspace of a CAT(0) cube complex $\cuco$. Let $\mathrm{Ess}_\cuco(\mathcal V)$ be the set of hyperplanes $\mathfrak h$ of $\cuco$ such that the intersections of $\mathcal V$ with both halfspaces of $\mathfrak h$ are $\mathfrak h$--essential. We usually omit the subscript $\cuco$, and this agrees with the notation $\mathrm{Ess}(\mathcal E)$, which we have used so far.

Similarly, if $H$ acts on $\cuco$, a hyperplane $\mathfrak h$ is called \emph{$H$--essential} if the intersections of an (hence any) $H$--orbit with both halfspaces of $\mathfrak h$ are $\mathfrak h$--essential. If $\mathcal V\subset \cuco$ is $H$--cocompact, then $\mathrm{Ess}(\mathcal V)$ is the set of $H$--essential hyperplanes.

If $\mathcal V$ is a convex subcomplex, then each $\mathfrak h\in\mathrm{Ess}_\cuco(\mathcal V)$ intersects $\mathcal V$, these intersections form $\mathrm{Ess}_{\mathcal V}(\mathcal V)$, and thus we can identify $\mathrm{Ess}_\cuco(\mathcal V)$ with $\mathrm{Ess}_{\mathcal V}(\mathcal V)$ and omit the subscript.
The \emph{essential core} $\mathcal{V}^{ess}$ is the CAT(0) cube complex dual to $\mathrm{Ess}(\mathcal V)$, which is a quotient of $\mathcal V$. If a group $H$ acts on $\mathcal V$ freely and cocompactly, then it acts on $\mathcal {V}^{ess}$ freely and cocompactly as well. If $J\leq H$ and a hyperplane $\mathfrak h$ of $\cuco$ is $J$--essential, then we have $\mathfrak h\in\mathrm{Ess}(\mathcal V)$ and the corresponding hyperplane in $\mathcal{V}^{ess}$ is $J$--essential as well.
%(see \cite{CapraceSageev:rank_rigidity}).
\end{defn}

\begin{defn}[Straight subgroup]\label{defn:characteristic_element}
Let $e$ be an edge of $\Gamma$. A maximal cyclic subgroup of $G_e$ is \emph{straight} if it contains a nontrivial element stabilizing a $G_e$--essential hyperplane of $\cuco$.
\end{defn}

\begin{prop}\label{prop:straight_edge}
For each edge $e$ of $\Gamma$, the group $G_e$ has exactly two straight subgroups.
\end{prop}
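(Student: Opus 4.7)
The plan is to combine the flat torus theorem with Lemmas~\ref{lem:straight} and~\ref{lem:non_cocompact_implies_E3}, using Lemma~\ref{lem:no_E3} to rule out every branch of the two dichotomies that would create a combinatorial half-flat of dimension $>2$ inside $\cuco$. Since $G_e\cong \integers^2$ acts freely and properly on the finite-dimensional CAT(0) cube complex $\cuco$, its elements are semisimple and the flat torus theorem produces a $G_e$-cocompact isometrically embedded copy $\mathcal E\subset\cuco$ of $\Euclidean^2$, on which $G_e$ necessarily acts by translations (as $\mathcal E/G_e$ is a closed flat surface with $\pi_1\cong\integers^2$). Apply Lemma~\ref{lem:straight} with $J=G_e$. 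The first alternative would give, for some finite-index $J'\leq G_e$, a $J'$-invariant combinatorial $n$-flat $\mathcal Y\subset\cuco$ with $n\geq 3$ and $\mathcal Y^{(1)}$ isometrically embedded in $\cuco^{(1)}$, hence a $J'$-cocompact $J'$-invariant $\Euclidean^2\subset\mathcal Y$ together with an orthogonal geodesic ray. Pushing this $\Euclidean^2\times[0,\infty)$ forward through the $G$-coarsely-equivariant quasi-isometry $\cuco\sim\widetilde M$ and absorbing the bounded Hausdorff error between its image and $\widetilde T_e$ yields a $J'$-equivariant quasi-isometric embedding $\widetilde T_e\times[0,\infty)\rightarrow\widetilde M$ extending the inclusion, contradicting Lemma~\ref{lem:no_E3}. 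Hence case~(2) of Lemma~\ref{lem:straight} holds; since the proof of that lemma establishes that the number of parallel families is at least $2$, $\mathcal E$ has exactly two.

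Next apply Lemma~\ref{lem:non_cocompact_implies_E3} to $\mathcal E$. Its first alternative produces a $J'$-invariant $\mathcal F\times\mathbf E_1$ with $\mathcal F$ a combinatorial half-plane, which again contains a $J'$-cocompact $\Euclidean^2$ plus an orthogonal combinatorial ray, so the identical transfer argument contradicts Lemma~\ref{lem:no_E3}. Therefore the second alternative holds: there exists a $G_e$-cocompact convex subcomplex $\widehat{\mathcal Y}\subset\cuco$. As $\widehat{\mathcal Y}$ is $G_e$-invariant and $G_e$-cocompact it lies in a bounded neighborhood of $\mathcal E$, and hence a hyperplane is $G_e$-essential if and only if it belongs to $\mathrm{Ess}(\mathcal E)$. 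The latter partitions into two parallel families $F_1,F_2$ with pairwise transverse intersection lines (Definition~\ref{defn:parallel_family}), so the two corresponding directions $d_1,d_2$ in $\mathcal E$ are distinct.

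Let $L_i\leq G_e$ denote the maximal cyclic subgroup of translations parallel to $d_i$; the subgroups $L_1,L_2$ are distinct since $d_1\neq d_2$. Any nontrivial $g\in G_e$ stabilizing some $\mathfrak h\in F_i$ preserves the line $\mathfrak h\cap\mathcal E$ of direction $d_i$, so $g\in L_i$, proving that every straight subgroup is one of $L_1,L_2$. Conversely, for $\mathfrak h\in F_i$ and $l=\mathfrak h\cap\mathcal E$, the set of hyperplanes meeting $\mathcal E$ in $l$ is finite: each such hyperplane contains a compact fundamental domain for the $L_i$-action on $l$, and $\cuco$ is locally finite by $G$-cocompactness. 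Since $L_i$ preserves $l$ setwise it permutes this finite set, so a nontrivial power of a generator of $L_i$ fixes $\mathfrak h$, showing that $L_i$ is straight.

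I expect the main obstacle to be the transfer step: Lemma~\ref{lem:no_E3} demands an honest $G_e'$-equivariant extension of the \emph{inclusion} $\widetilde T_e\subset\widetilde M$, whereas Lemmas~\ref{lem:straight} and~\ref{lem:non_cocompact_implies_E3} only provide combinatorial $1$-skeleton isometric embeddings inside $\cuco$. Carrying out the transfer requires invoking a $G$-coarsely-equivariant quasi-isometry $\cuco\sim\widetilde M$, observing that any $J'$-invariant $J'$-cocompact quasi-plane in $\widetilde M$ lies at finite Hausdorff distance from $\widetilde T_e$, and adjusting the resulting map by this bounded error to produce the forbidden extension.
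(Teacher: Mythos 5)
Your proposal is correct and follows essentially the same route as the paper: the Flat Torus Theorem gives a $G_e$--cocompact flat $\mathcal E$, case (1) of Lemma~\ref{lem:straight} is excluded by transferring the resulting higher-dimensional flat to $\widetilde M$ via an equivariant quasi-isometry and invoking Lemma~\ref{lem:no_E3}, and the two maximal cyclic subgroups of $G_e$ translating along the two line-directions of $\mathrm{Ess}(\mathcal E)$ are identified as exactly the straight subgroups, just as in the paper. The only deviation is your detour through Lemma~\ref{lem:non_cocompact_implies_E3} to see that the $G_e$--essential hyperplanes are precisely those in $\mathrm{Ess}(\mathcal E)$; this is unnecessary, since $\mathcal E$ is $G_e$--cocompact and the identification is immediate from Definition~\ref{defn:essential_hyperplane_core} (the cocompact convex core is only needed later, in Lemma~\ref{lem:cocompact_cores_for_JSJ}).
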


\begin{proof}
By~\cite[Thm~1.4]{Haglund:cubes_semisimple}, $G$ acts semisimply on $\cuco$. By the Flat Torus Theorem~\cite[Thm~II.7.1]{BridsonHaefliger:book}, $\cuco$ contains an isometrically embedded copy $\mathcal E$ of $\Euclidean^2$ on which $G_e$ acts cocompactly. We apply Lemma~\ref{lem:straight} with $J=G_e$. Note that the set of $G_e$--essential hyperplanes coincides with the union of parallel families $\mathrm {Ess}(\mathcal E)$. In case (1) of Lemma~\ref{lem:straight}, a $G$--equivariant quasi-isometry $\cuco\rightarrow \widetilde M$ maps $\mathcal Y$ to $\widetilde{T}_e\times (-\infty,\infty)^{n-2}$ contradicting Lemma~\ref{lem:no_E3}.
Hence we have case (2), which says that $\mathcal{E}$ has two parallel families. Consider the two maximal cyclic subgroups $\integers_a,\integers_b< G_e$ stabilizing lines in these families. Then $\integers_a,\integers_b$ are straight, since they have finite-index subgroups stabilizing all hyperplanes in one of the corresponding families of $\mathrm {Ess}(\mathcal E)$.
On the other hand, each $G_e$--essential hyperplane intersects $\mathcal E$ along one of these lines, so these are the only straight subgroups.
\end{proof}

\subsection{Cocompact cores}

\begin{lem}\label{lem:cocompact_cores_for_JSJ}
For each edge $e$ of $\Gamma$, there exists a convex $G_e$--cocompact subcomplex $\widehat{\mathcal Y}_e\subset\cuco$.
\end{lem}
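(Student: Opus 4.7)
The plan is to combine Proposition~\ref{prop:straight_edge} (which puts us in case (2) of Lemma~\ref{lem:straight}) with Lemma~\ref{lem:non_cocompact_implies_E3}, and rule out case (1) of the latter by the same type of asymptotic cone argument already used in Lemma~\ref{lem:no_E3}.

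First, I would invoke \cite[Thm~1.4]{Haglund:cubes_semisimple} and the Flat Torus Theorem \cite[Thm~II.7.1]{BridsonHaefliger:book}, as in the proof of Proposition~\ref{prop:straight_edge}, to obtain a $G_e$--cocompact isometrically embedded copy $\mathcal E\subset\cuco$ of $\Euclidean^2$ (with $G_e\cong\integers^2$ acting freely). The proof of Proposition~\ref{prop:straight_edge} already shows that case~(1) of Lemma~\ref{lem:straight} cannot occur, so $\mathcal E$ has exactly two parallel families. This puts us precisely in the hypothesis of Lemma~\ref{lem:non_cocompact_implies_E3} with $J=G_e$.

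Now I would apply Lemma~\ref{lem:non_cocompact_implies_E3}. In case~(2) we immediately obtain a $G_e$--cocompact convex subcomplex $\widehat{\mathcal Y}=\widehat{\mathcal Y}_e$, which is exactly what is wanted. The main work is ruling out case~(1): suppose some finite-index $J'\leq G_e$ preserves a subcomplex $\mathcal F\times\mathbf E_1\subset\cuco$ whose $1$--skeleton is isometrically embedded in $\cuco^{(1)}$, where $\mathcal F$ is a combinatorial half-plane. Then $\mathcal F$ contains a combinatorial half-plane of the form $\gamma\times[0,\infty)$ bounded by its boundary geodesic, so $\mathcal F\times\mathbf E_1$ contains a combinatorial half-space $\mathbf E_2\times[0,\infty)$, with $J'$ acting cocompactly on its boundary combinatorial $2$--flat.

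The final step is to derive a contradiction with Lemma~\ref{lem:no_E3}. By the \v Svarc--Milnor lemma, there is a $G$--equivariant quasi-isometry $q\colon \cuco\to\widetilde M$. Since the boundary $2$--flat of the half-space is $J'$--cocompact, $q$ sends it to a $J'$--invariant quasi-flat in $\widetilde M$ that is at finite Hausdorff distance from $\widetilde T_e$ (since $\widetilde T_e$ is the unique $J'$--cocompact flat in $\widetilde M$ up to bounded Hausdorff distance). Composing $q$ with the inclusion $\mathbf E_2\times[0,\infty)\hookrightarrow\cuco$ and then adjusting within a bounded neighborhood, I would obtain a $J'$--equivariant quasi-isometric embedding $\widetilde T_e\times[0,\infty)\to\widetilde M$ extending the inclusion $\widetilde T_e\subset\widetilde M$, directly contradicting Lemma~\ref{lem:no_E3}. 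This is analogous to the contradiction derived in Proposition~\ref{prop:straight_edge} from case~(1) of Lemma~\ref{lem:straight}.

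The main obstacle is the bookkeeping to produce an honest extension of the inclusion (rather than just a $J'$--equivariant quasi-isometric embedding close to it) so that Lemma~\ref{lem:no_E3} applies verbatim; this is handled by projecting within the bounded neighborhood of $\widetilde T_e$, using that $\widetilde T_e$ is convex in $\widetilde M$ with the pulled-back Riemannian metric. Once case~(1) is excluded, case~(2) delivers $\widehat{\mathcal Y}_e$ with no further work.
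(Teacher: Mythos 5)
Your proposal is correct and takes essentially the same route as the paper: its proof likewise produces the $G_e$--cocompact flat $\mathcal E$ via the Flat Torus Theorem, applies Lemma~\ref{lem:straight} to land in case~(2), then applies Lemma~\ref{lem:non_cocompact_implies_E3} and excludes its case~(1) by a contradiction with Lemma~\ref{lem:no_E3} via the $G$--equivariant quasi-isometry $\cuco\rightarrow\widetilde M$. The equivariant bookkeeping you describe (which the paper leaves implicit) is routine; only the aside that $\widetilde T_e$ is convex in the pulled-back metric is unjustified and unnecessary---a coarse equivariant identification of the boundary flat with $\widetilde T_e$ suffices.
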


\begin{proof}
As in the proof of Proposition~\ref{prop:straight_edge}, the Flat Torus Theorem yields a $G_e$--cocompact copy $\mathcal E$ of $\Euclidean^2$ isometrically embedded in $\cuco$. We apply Lemma~\ref{lem:straight} as before and conclude that we have case (2), hence we can apply Lemma~\ref{lem:non_cocompact_implies_E3}. Case (1) of that lemma leads to a contradiction with Lemma~\ref{lem:no_E3}. Hence we have case (2) saying that there exists a $G_e$--cocompact convex subcomplex $\widehat{\mathcal Y}_e\subset\cuco$.
\end{proof}

\begin{prop}\label{prop:core_for_block}
For each vertex $v$ of $\Gamma$, there exists a convex $G_v$--cocompact subcomplex $\mathcal V_v\subset\cuco$.
\end{prop}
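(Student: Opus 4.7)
The plan is to adapt the proof of Lemma~\ref{lem:cocompact_cores_for_JSJ} from edge stabilisers to vertex stabilisers, exploiting the central extension structure of $G_v$. Recall that $G_v$ fits into a short exact sequence
\[
1 \to \langle z \rangle \to G_v \to Q_v \to 1,
\]
where $z$ represents the regular fiber of $B_v$ and $Q_v = \pi_1^{\mathrm{orb}}(F_v)$ is the fundamental group of the hyperbolic base orbifold, which is virtually free since $F_v$ has boundary. Crucially, $\langle z\rangle \leq G_e$ for every edge $e$ of $\Gamma$ incident to $v$, so the central direction is shared by all neighbouring edge cubulations.

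The first step is to pick representatives $e_1,\ldots,e_k$ of the finitely many $G_v$-orbits of edges incident to $v$ and, via Lemma~\ref{lem:cocompact_cores_for_JSJ}, obtain $G_{e_i}$-cocompact convex subcomplexes $\widehat{\mathcal Y}_{e_i}\subset\cuco$. Take $\mathcal V_v$ to be the cubical convex hull in $\cuco$ of $\bigcup_{g\in G_v,\,1\leq i\leq k}g\widehat{\mathcal Y}_{e_i}$. By construction $\mathcal V_v$ is a $G_v$-invariant convex subcomplex, so the substance of the proposition is to verify that $G_v$ acts cocompactly. To do this, I would exploit the centrality of $z$: since $z$ acts semisimply on $\cuco$ by~\cite{Haglund:cubes_semisimple} and is central in $G_v$, a standard argument (after replacing $z$ by a suitable power to eliminate inversions) produces a $G_v$-invariant convex subcomplex $\alpha \times \mathcal T \subset \cuco$ in which $\alpha$ is a combinatorial axis of $z$ and $G_v$ acts on the factor $\mathcal T$ through $Q_v$. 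After intersecting with this product, each $\widehat{\mathcal Y}_{e_i}$ takes the form $\alpha \times \delta_i$, where $\delta_i \subset \mathcal T$ is a cocompact axis for the peripheral cyclic subgroup of $Q_v$ corresponding to $e_i$.

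The main obstacle is to verify that $Q_v$ acts cocompactly on the cubical convex hull of $\bigcup_i \delta_i \subset \mathcal T$. Since $Q_v$ is virtually free and each peripheral cyclic subgroup admits a cocompact axis, one should be able to argue as in the proof of Lemma~\ref{lem:cocompact_cores_for_JSJ}: any failure of cocompactness would, via an iterated application of Lemma~\ref{lem:non_cocompact_implies_E3}, produce either a subcomplex of the form $\mathcal F \times \mathbf E_1$ or a combinatorial flat of dimension $\geq 3$; pulling this subcomplex back to $\widetilde M$ along a $G$-equivariant quasi-isometry, it would lie in a uniform neighbourhood of some JSJ plane $\widetilde T_e$ and hence contradict Lemma~\ref{lem:no_E3}. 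Combining cocompactness on $\mathcal T$ with $\langle z\rangle$-cocompactness on $\alpha$ then yields $G_v$-cocompactness of $\mathcal V_v$.
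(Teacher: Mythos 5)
Your overall strategy---split off the fiber direction first and then handle the base---is genuinely different from the paper's, but as written it has two serious gaps. First, the claimed $G_v$--invariant convex \emph{subcomplex} $\alpha\times\mathcal T$ on which $G_v$ acts through $Q_v$ is not available at this stage. Centrality of $z$ together with semisimplicity (\cite{Haglund:cubes_semisimple}) only gives the CAT(0) Flat Torus Theorem splitting of the minimal set $\mathrm{Min}(z)\cong Y\times\reals$, which is a convex sub\emph{space} in the CAT(0) metric, not a subcomplex, and $Y$ carries no cubical structure; there is no ``standard argument'' upgrading this to a cubical product with a combinatorial axis factor. In this paper such a cubical splitting is obtained only in Lemma~\ref{lem:essential_core}, via Caprace--Sageev rank rigidity applied to the \emph{essential core of an already $G_v$--cocompact convex subcomplex}---that is, it presupposes exactly the core $\mathcal V_v$ that Proposition~\ref{prop:core_for_block} is meant to produce, so your first step is circular. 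The further assertion that each $\widehat{\mathcal Y}_{e_i}$ meets this product in a set of the form $\alpha\times\delta_i$ with $\delta_i$ a cocompact axis is likewise unjustified (the intersection could a priori be empty or non-cocompact for $G_{e_i}$).

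Second, even granting the splitting, the step you yourself flag as the main obstacle is not delivered by the tools you cite. Lemma~\ref{lem:non_cocompact_implies_E3} is a dichotomy for a \emph{free $\integers^2$--action} with a $J$--cocompact isometrically embedded copy of $\Euclidean^2$ having two parallel families; it says nothing about a virtually free group $Q_v$ acting on $\mathcal T$, nor about cubical convex hulls of unions of axes, and no ``iterated application'' converts non-cocompactness of such a hull into a half-flat $\mathcal F\times\mathbf E_1$ or a higher-dimensional flat. Moreover, to contradict Lemma~\ref{lem:no_E3} one needs the resulting subcomplex to be invariant under a finite-index subgroup of a \emph{specific} edge group $G_e$, since that equivariance is what yields the forbidden $G_e'$--equivariant quasi-isometric embedding of $\widetilde T_e\times[0,\infty)$; a subcomplex arising from a failure of cocompactness over $Q_v$ comes with no such equivariance and has no reason to lie near a single JSJ plane. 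The paper's proof avoids all of this: it does not take hulls of unions at all, but defines $\mathcal V_v$ as the intersection, over edges $e$ incident to $v$, of the complementary regions of the cores $\widehat{\mathcal Y}_e$ on the side of the block, using the two-pole structure of $\widetilde M$ relative to $\widetilde T_e$ and its quasi-isometry invariance (the appendix of \cite{CapracePrzytycki:bipolar}), and then shows that $\phi^{-1}(\mathcal V_v)$ lies in a uniform neighborhood of $\widetilde B_v$, which gives $G_v$--cocompactness directly. To repair your argument you would need either a substitute for Lemma~\ref{lem:non_cocompact_implies_E3} valid for the $Q_v$--action, or an independent proof that the hull of the $G_v$--orbit of the edge cores is cocompact---and the latter is precisely the nontrivial content of the proposition.
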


\newcommand{\leftx}{\overleftarrow{\cuco}}
\newcommand{\rightx}{\overrightarrow{\cuco}}

Given a subcomplex ${\mathcal Y}\subset\cuco$, let ${\mathcal Y}^{+0}=\mathcal Y$ and let the \emph{cubical $k$--neighborhood} ${\mathcal Y}^{+k}$ be the union of all closed cubes intersecting ${\mathcal Y}^{+(k-1)}$. If $\mathcal Y\subset \cuco$ is convex, then ${\mathcal Y}^{+k}$ is convex as well.

\begin{proof}
Fix a base vertex $x\in \cuco$. By Lemma~\ref{lem:cocompact_cores_for_JSJ}, there is a $G$--equivariant
family of $G_e$--cocompact convex subcomplexes
$\widehat{\mathcal Y}_e\subset \cuco$, where $e$ varies over the set of edges incident to $v$.
Let $\phi\colon \widetilde{M}\rightarrow \cuco$ be a $G$--equivariant
quasi-isometry. Each $\widetilde{M}-\neb_k(\widetilde{T}_e)$ has two
$\widetilde{T}_e$--essential components. Let $\overleftarrow M_e^k$ be the $\widetilde{T}_e$--essential component disjoint from $\widetilde{B}_v$ and let $\overrightarrow M_e^k=\widetilde M-\overleftarrow M_e^k$. In other words, the space $\widetilde{M}$ has two \emph{poles} with
respect to each $\widetilde{T}_e$ (see \cite[Appendix~A]{CapracePrzytycki:bipolar}). Poles are quasi-isometry invariants \cite[Lem~A.2]{CapracePrzytycki:bipolar}, so by
\cite[Lem~A.4]{CapracePrzytycki:bipolar}, there is $k$ such that $\cuco-\widehat{\mathcal Y}^{+k}_e$ has two
$\widehat{\mathcal Y}_e$--essential components, one of which, denoted by $\leftx^k_e$, is
disjoint from the orbit $G_vx\subset \cuco$. Moreover, there exists $K$ such
that $\phi (\overleftarrow M_e^K)\subset \leftx^k_e$; see~\cite[Sublem~A.3]{CapracePrzytycki:bipolar}. Let $\rightx^k_e=\cuco-\leftx^k_e$.
Let $\mathcal V_v\subset \cuco$ be the convex $G_v$--invariant subcomplex $\bigcap_e
\rightx^k_e$, which contains $x$. Let $K'\geq K$ be such that $\neb_{K'}(\widetilde
B_v)$ contains all complementary components of $\neb_{K}(\widetilde T_e)$ that are not $\widetilde T_e$--essential.
Then we have $\phi^{-1}(\mathcal V_v)\subset \bigcap_e \overrightarrow
M^K_e\subset \neb_{K'}(\widetilde B_v)$. Since $\neb_{K'}(\widetilde
B_v)$ is $G_v$--cocompact, the complex $\mathcal V_v$ is $G_v$--cocompact as
well.
\end{proof}

The final preparatory lemma requires using two results from~\cite{CapraceSageev:rank_rigidity}.

\begin{lem}\label{lem:essential_core} Consider the product of the free cyclic and a finitely generated free non-abelian group $H=\integers\times\mathbb{F}$. Suppose that $H$ acts freely and cocompactly on a \textnormal{CAT(0)} cube complex $\mathcal V$. Then the following hold:
\begin{enumerate}
 \item The essential core $\mathcal V^{ess}$ of $\mathcal V$ is a product $\mathcal V_a\times\mathcal V_b$, where $\mathcal V_a,\mathcal V_b$ are unbounded.
 \item The group $H$ has a finite-index subgroup $H'=H_a\times H_b$ that preserves the above decomposition, where $H_a$ acts trivially on $\mathcal V_b$ and $H_b$ acts trivially on $\mathcal V_a$.
 \item We have $H_a=\integers\cap H'$ and the group $H_b$ embeds as a finite-index subgroup of the free group $H/\integers\cong\mathbb{F}$ under the natural quotient.
\end{enumerate}
\end{lem}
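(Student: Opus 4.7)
The plan is to invoke two results from Caprace--Sageev~\cite{CapraceSageev:rank_rigidity}: their canonical product decomposition theorem for essential actions without a global fixed point at infinity, and their rank rigidity theorem.

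First, I pass to the essential core $\mathcal V^{ess}$. The $H$--action on $\mathcal V^{ess}$ is essential by construction and cocompact via the equivariant quotient $\mathcal V \twoheadrightarrow \mathcal V^{ess}$; after replacing $H$ by a finite-index subgroup the action is also free. The action has no global fixed point at infinity, since two non-commuting generators of $\mathbb F \leq H$ act as hyperbolic isometries of $\mathcal V^{ess}$ whose axes have four distinct endpoints at infinity.

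The canonical product decomposition then gives $\mathcal V^{ess} = \mathcal V_1 \times \cdots \times \mathcal V_k$ into irreducible essential factors preserved by a finite-index subgroup $H_0 \leq H$; let $\pi_i \colon H_0 \to \Aut(\mathcal V_i)$ be the projections. Rank rigidity provides a rank-one isometry $r_i \in \pi_i(H_0)$ in each factor. Since $z \in \integers$ is central in $H$, its image $\pi_i(z)$ lies in the virtually infinite cyclic centralizer $C_{\pi_i(H_0)}(r_i)$; whenever $\pi_i(z) \neq 1$, the element $\pi_i(z)$ is itself rank-one sharing an axis with $r_i$, forcing $\pi_i(H_0)$ to be virtually cyclic, $\mathcal V_i \cong \mathbf E_1$, and $\pi_i(\mathbb F)$ cyclic.

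Set $I = \{i : \pi_i(z) \neq 1\}$, $\mathcal V_a = \prod_{i \in I} \mathcal V_i$, and $\mathcal V_b = \prod_{i \notin I} \mathcal V_i$; the set $I$ is nonempty since $z$ acts freely. I claim $|I| = 1$. First, $\pi_b|_{\mathbb F}$ is injective: its kernel is a normal subgroup of $\mathbb F$ that, acting freely on $\mathcal V_a = \prod_{i \in I} \mathbf E_1$ by translations, embeds into $\integers^{|I|}$ and is therefore abelian; but normal abelian subgroups of the non-abelian free group $\mathbb F$ are trivial. Next, if $|I| \geq 2$ then $\pi_a(\mathbb F) \cap \pi_a(\integers)$ has infinite index in $\pi_a(\mathbb F)$, as otherwise $\pi_a(H_0) = \pi_a(\integers) + \pi_a(\mathbb F)$ would have rank at most $1$, contradicting cocompactness on $\mathbf E_{|I|}$. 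Hence the point stabilizer $K = \ker(\pi_a|_{H_0})$ projects to an infinite-index subgroup of $\mathbb F$, and by injectivity of $\pi_b|_{\mathbb F}$ to an infinite-index subgroup of $\pi_b(H_0)$, contradicting the cocompact action of $K$ on $\mathcal V_b$ required by cocompactness of $H_0$ on $\mathcal V_a \times \mathcal V_b$. Thus $\mathcal V_a \cong \mathbf E_1$; unboundedness of $\mathcal V_a$ is immediate, and $\mathcal V_b$ is unbounded because $\pi_b|_{\mathbb F}$ embeds the non-abelian $\mathbb F$ into $\Aut(\mathcal V_b)$, which would be finite if $\mathcal V_b$ were bounded. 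This gives (1).

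Finally, for (2)--(3) I set $H_a = \integers \cap H'$ and $H_b = H' \cap \ker(\pi_a)$ for an appropriate finite-index subgroup $H' \leq H_0$. Since $\pi_a(\integers)$ has finite index in the cyclic $\pi_a(H_0)$, for each $f$ in a finite-index subgroup $\mathbb F_0 \leq \mathbb F$ there is a unique $k(f) \in \integers$ with $(z^{k(f)}, f) \in \ker(\pi_a)$; taking $H'$ to comprise these lifts together with $H_a$ realizes $H' = H_a \times H_b$ as an internal direct product, with $H_b$ mapping isomorphically to $\mathbb F_0$ under $H \to H/\integers \cong \mathbb F$. The main obstacle is the rank argument forcing $|I| = 1$: it combines cocompactness of the product action, centrality of $z$, injectivity of $\pi_b|_{\mathbb F}$ (from normal abelian subgroups of $\mathbb F$ being trivial), and a rank analysis of $\pi_a(H_0)$ in $\integers^{|I|}$.
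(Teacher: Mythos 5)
Your route has several genuine gaps, the most serious being the inputs you feed into rank rigidity on each irreducible factor. First, the claim that $H$ has no fixed point at infinity is not established by your argument: two hyperbolic isometries whose axes have four distinct endpoints can still have a common fixed point elsewhere on the visual boundary (for instance $\integers^2$ acting by translations on $\mathbf E_2$ fixes \emph{every} boundary point, even though generic pairs of axes have four distinct endpoints). Since you apply the version of rank rigidity that assumes ``essential and no fixed point at infinity'' factor by factor, this hypothesis needs a real proof, and cocompactness alone does not give it. Second, your centralizer argument is carried out in $\pi_i(H_0)$ acting on $\mathcal V_i$, but the projection of a proper action to a factor need not be proper, and the statement ``the centralizer of a rank-one isometry is virtually cyclic'' (as well as ``a nontrivial central $\pi_i(z)$ commuting with a rank-one element is itself rank-one and forces $\pi_i(H_0)$ to be virtually cyclic with $\mathcal V_i\cong\mathbf E_1$'') requires properness; without it $\pi_i(z)$ could even be elliptic, and an irreducible quasi-line need not be literally $\mathbf E_1$ (compare Remark~\ref{rem:flat}). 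Third, the step forcing $|I|=1$ uses that cocompactness of $H_0$ on $\mathcal V_a\times\mathcal V_b$ would require $\ker(\pi_a|_{H_0})$ to act cocompactly on $\mathcal V_b$; this is false in general (irreducible lattices in products of trees act properly cocompactly on the product with trivial factor kernels), so that contradiction is not valid as stated. A smaller, patchable slip: a group acting freely on a product of lines embeds in a product of infinite dihedral groups, so you only get virtually abelian, not abelian --- though normal virtually abelian subgroups of $\mathbb F$ are still trivial, so this one can be repaired. Note also that the lemma never asks for $\mathcal V_a\cong\mathbf E_1$, so much of the machinery you deploy is aimed at a stronger statement than needed.

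For comparison, the paper's proof sidesteps all of these issues: since every centralizer in $H=\integers\times\mathbb F$ contains a copy of $\integers^2$, no element of $H$ can act as a rank-one isometry on $\mathcal V^{ess}$, and Caprace--Sageev's Corollary~6.4(iii) --- packaged precisely for proper cocompact actions, with no separate boundary hypothesis to verify --- immediately yields the product decomposition with unbounded factors; assertion~(2) comes from their Proposition~2.6 ($\Aut(\mathcal V_a)\times\Aut(\mathcal V_b)$ has finite index in $\Aut(\mathcal V^{ess})$), and assertion~(3) is a short algebraic argument inside $\integers'\times\mathbb F'$ using centers and centralizers to identify $H_a=\integers\cap H'$. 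If you want to keep your per-factor analysis, you would need to supply the missing no-fixed-point-at-infinity argument and a substitute for properness of the factor actions; citing the cocompact form of rank rigidity as the paper does is the cleaner fix.
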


\begin{proof}
Since $H$ is a direct product with infinite factors, none of its elements is rank 1 in the action on $\mathcal V^{ess}$.
Hence \cite[Cor~6.4(iii)]{CapraceSageev:rank_rigidity} implies assertion~(1). By \cite[Prop~2.6]{CapraceSageev:rank_rigidity} the group $\Aut(\mathcal V^{ess})$ contains $\Aut(\mathcal V_a)\times\Aut(\mathcal V_b)$ as a finite-index subgroup, from which assertion~(2) follows. Since each of the subcomplexes $\mathcal V_a,\mathcal V_b$ is unbounded, $H_a,H_b$ are non-trivial.

Any finite-index subgroup $H'\leq H$ has the form $\integers'\times\mathbb F'$, where $\integers'=\integers\cap H'$. We will identify one of $H_a,H_b$ as $\integers'$.
First note that either $H_a$ or $H_b$ has trivial center. Otherwise we would have $\integers^2$ in the center of $\integers'\times\mathbb F'$. Secondly, consider $z\in \integers'=\integers\cap H'$. Then $z\in H_a$ or $z\in H_b$, since otherwise the projections of $z$ to $H_a,H_b$ would both be central. Without loss of generality we assume $\integers'\leq H_a$. Conversely, an element $h\in H_a$ commutes with both $\integers'$ and $H_b$. Since the only elements with non-cyclic centralizer in $\integers'\times\mathbb F'$ are in $\integers'$, we have $h\in \integers'$, establishing $H_a=\integers'$. Hence $H_b$ embeds in the quotient $H/\integers$. Finally, $H_b$ has finite index in $H/\integers$ since $H'$ has finite index in $H$. This establishes assertion~(3).
\end{proof}

\subsection{Vanishing of charges}\label{subsec:charges_vanish}

\begin{proof}[Proof of Theorem~\ref{thmi:cocompact_cubulation}]
Suppose $G=\pi_1M$ acts freely and cocompactly on a CAT(0) cube complex $\cuco$. By~\cite[Prop~4.4]{LW}, after passing to a finite cover we can assume that the blocks $B_{\bar{v}}$ of $M$ have no singular fibers. In other words their base orbifolds $F_{\bar{v}}$ are surfaces and consequently their fundamental groups are free. Hence $G_v=\integers_v\times \mathbb F_v$, where $\integers_v$ is the cyclic fiber group and $\mathbb F_v$ is a free group.

By Proposition~\ref{prop:core_for_block}, there is a convex $G_v$--cocompact subcomplex $\mathcal V_v\subset \cuco$. We consider
the essential core $\mathcal V^{ess}_v$ of $\mathcal V_v$, which decomposes as $\mathcal V_a\times\mathcal V_b$ by Lemma~\ref{lem:essential_core}(1) applied to $H=G_v$. Let $H'=H_a\times H_b$ be the subgroups provided by Lemma~\ref{lem:essential_core}(2).

Let $\bar v$ be the image of $v$ in $\overline \Gamma$ and let $F'_{\bar v}\rightarrow F_{\bar v}$ be the finite cover of the base surface $F_{\bar v}$ corresponding to the embedding $H_b\leq G_v/ \integers_v\cong\pi_1 F_{\bar v}$ coming from Lemma~\ref{lem:essential_core}.(3). Let $\mathcal C$ be the collection of boundary curves of $F'_{\bar v}$. For each $c\in\mathcal C$, choose an edge $e$ of $\Gamma$ incident to $v$ so that $G_e\cap H_b< H_b\cong \pi_1F'_{\bar v}$ is a conjugate of $\pi_1 c$.
 The inclusion $G_{e}\cap H_b< G_{e}$ can be considered as inclusion on homology
$\homology_1(G_{e}\cap H_b)< \homology_1(G_{e})=\homology_1(T_{\bar e})$. Choose an embedded non-vertical curve $\breve c\subset T_{\bar e}$ and an integer $n_{c}$ such that $n_{c}[\breve c]$ is the image of $[c]\in\homology_1(G_{e}\cap H_b)$ under this inclusion. Let $\phi \colon \homology_1(H_b)\rightarrow\homology_1(G_v)=\homology_1(B_{\bar v})$ be induced by $H_b< G_v$.
Hence $\phi \big(\sum_{c\in\mathcal C}[c]\big)=\sum_{c\in\mathcal C}n_{c}[\breve c]$. Since $\mathcal C$ is the collection of boundary curves of a surface, we have $\sum_{c}[c]=0$ in $\homology_1(H_b)$, whence $\sum_{c}n_{c}[\breve c]=0$ in $\homology_1(B_{\bar v})$.
To show that $M$ is chargeless, that is to verify the analogous formula from Definition~\ref{def:chargeless}, it remains to show that $\breve c$ is homotopic to $Z^{\bar e}_{\bar v'}$.

\begin{claim}
Let $\integers_a,\integers_b< G_e$ be the straight subgroups coming from Proposition~\ref{prop:straight_edge}. Then the groups $H_a, \ G_e\cap H_b$ are finite-index subgroups of $\integers_a,\integers_b$ or vice-versa.
\end{claim}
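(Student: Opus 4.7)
The plan is to place $H_a$ and $G_e\cap H_b$ as finite-index subgroups of two distinct straight subgroups of $G_e$, via the product decomposition of $\mathcal V^{ess}_v$ supplied by Lemma~\ref{lem:essential_core}. First, since the fiber subgroup $\integers_v$ of $B_{\bar v}$ lies in $G_e$ and $H_a\leq\integers_v$, we have $H_a\subset G_e$. For any element $h_a h_b\in H'\cap G_e$ with $h_a\in H_a$ and $h_b\in H_b$, we then have $h_b=h_a^{-1}(h_a h_b)\in G_e$, so $H'\cap G_e = H_a\times(G_e\cap H_b)$. Since $H'\cap G_e$ has finite index in $G_e\cong\integers^2$, the subgroups $H_a$ and $G_e\cap H_b$ are both nontrivial infinite cyclic and together generate a rank-$2$ subgroup of $G_e$; in particular they are independent.

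Next, write $\mathcal V^{ess}_v = \mathcal V_a\times\mathcal V_b$ with $H_a$ acting trivially on $\mathcal V_b$ and $H_b$ (hence $G_e\cap H_b$) acting trivially on $\mathcal V_a$, using Lemma~\ref{lem:essential_core}. Every hyperplane of $\mathcal V^{ess}_v$ has the form $\mathfrak h_a\times\mathcal V_b$ (stabilized by $G_e\cap H_b$) or $\mathcal V_a\times\mathfrak h_b$ (stabilized by $H_a$). To produce a $G_e$-essential hyperplane of the second form, let $\beta\subset\mathcal V_b$ be a translation axis of a generator of $G_e\cap H_b$; since $\beta$ crosses some hyperplane $\mathfrak h_b$ of $\mathcal V_b$, the $(G_e\cap H_b)$-orbit, and hence the $G_e$-orbit, is unbounded in both halfspaces of $\mathcal V_a\times\mathfrak h_b$. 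Through the identification of $\mathrm{Ess}(\mathcal V_v)$ with the hyperplanes of $\mathcal V^{ess}_v$, this gives a $G_e$-essential hyperplane of $\cuco$ stabilized by $H_a$. A symmetric construction using a translation axis of $H_a$ in $\mathcal V_a$ produces a $G_e$-essential hyperplane of $\cuco$ stabilized by $G_e\cap H_b$.

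Each of the nontrivial infinite cyclic subgroups $H_a$ and $G_e\cap H_b$ of $G_e\cong\integers^2$ thus contains a nontrivial element stabilizing a $G_e$-essential hyperplane, so by Definition~\ref{defn:characteristic_element} the unique maximal cyclic subgroup of $G_e$ containing it is a straight subgroup, in which it has finite index. By independence these two maximal cyclic subgroups are distinct, so they must be $\integers_a$ and $\integers_b$ in some order, which is exactly the claim. The main technical care needed is the transfer of $G_e$-essentiality from $\mathcal V^{ess}_v$ to $\cuco$ in the middle paragraph: this holds because the collapse map $\mathcal V_v\to\mathcal V^{ess}_v$ is non-expanding on $0$-skeleta, so an orbit unbounded in both halfspaces of a hyperplane downstairs remains unbounded in both halfspaces upstairs.
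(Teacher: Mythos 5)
Your argument is correct, but it runs in the opposite direction to the paper's. The paper starts from the straight subgroups themselves: it takes a nontrivial $h\in\integers_a$ stabilizing a $G_e$--essential hyperplane of $\cuco$, passes to a power so that $h\in G_e\cap H'=H_a\times(G_e\cap H_b)$, notes that the corresponding hyperplane of $\mathcal V^{ess}_v$ is a product hyperplane, say $\mathcal V_a\times\mathfrak h$, writes $h=a\times b$, and forces $b=1$ because otherwise $H_a\times\langle b\rangle$ would be a finite-index subgroup of $G_e$ stabilizing a $G_e$--essential hyperplane, which is impossible; this places a nontrivial element of $\integers_a$ in $H_a$ (and symmetrically for $\integers_b$ and $G_e\cap H_b$), and it only uses the essentiality transfer in the direction recorded in Definition~\ref{defn:essential_hyperplane_core}, from $\cuco$ down to $\mathcal V^{ess}_v$. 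You instead show that $H_a$ and $G_e\cap H_b$ each stabilize some $G_e$--essential hyperplane of $\cuco$, built as a product hyperplane of $\mathcal V^{ess}_v$ crossed by an axis of the complementary factor, and then conclude via independence of $H_a$ and $G_e\cap H_b$ together with the count of exactly two straight subgroups in Proposition~\ref{prop:straight_edge}. This yields a slightly stronger intermediate statement (each of $H_a$, $G_e\cap H_b$ sits with finite index in a straight subgroup by direct construction), but at the cost of two steps the paper avoids and you state too quickly: first, that an axis crossing a hyperplane forces the orbit arbitrarily deep into both halfspaces needs an argument (e.g.\ convexity of the distance to the hyperplane along the axis, or finite-dimensionality plus skewering of translates of the hyperplane); second, the transfer of $G_e$--essentiality from $\mathcal V^{ess}_v$ back up to $\cuco$ is not a consequence of the non-expanding collapse map alone, since one must also rule out the orbit approaching the part of the hyperplane of $\cuco$ lying outside $\mathcal V_v$ --- this follows because any hyperplane of $\mathrm{Ess}(\mathcal V_v)$ separating the orbit point from the given hyperplane inside $\mathcal V_v$ cannot cross it in $\cuco$ (convexity of $\mathcal V_v$ and the Helly property), hence separates them in $\cuco$ as well. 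Both points are standard and fixable, so your proof goes through.
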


Before we justify the claim, we explain how to use it to complete the proof. The claim yields $H_a\leq \integers_a$ and since $H_a\leq\integers_v$ by Lemma~\ref{lem:essential_core}(3), we obtain $\integers_a=\integers_v$. Moreover, the claim gives that the image of $[c]\in\homology_1(G_{e}\cap H_b)$ in $\homology_1(G_{e})=G_e$ lies in $\integers_b$, whence $\langle[\breve c]\rangle=\integers_b$. If $e=(v,v')$ and we apply the claim to $e$ and $v'$, we similarly obtain that either $\integers_a$ or $\integers_b$ coincides with $\integers_{v'}$. Since $\integers_{v'}\cap\integers_v=\{1\}$, we have $\integers_b=\integers_{v'}$. For Definition~\ref{def:chargeless} the circle $Z^{\bar e}_{\bar v'}$ was defined to satisfy $\langle[Z^{\bar e}_{\bar v'}]\rangle=\integers_{v'}$ in $\homology_1 (T_{\bar e})=G_e$. Thus we have $\langle[\breve c]\rangle=\integers_b=\integers_{v'}=\langle[Z^{\bar e}_{\bar v'}]\rangle$, which ends the proof of the theorem.

It remains to justify the claim. By Lemma~\ref{lem:essential_core}(3) we have $H_a=\integers_v\cap H'$ and hence $G_e\cap H'=H_a\times (G_e\cap H_b)$, which is a product of cyclic groups.
Thus it suffices to find nontrivial elements in $\integers_a\cap H_a,\ \integers_b\cap (G_e\cap H_b)$.
Since $\integers_a$ is straight, it contains a nontrivial element $h$ stabilizing a $G_e$--essential hyperplane of $\cuco$.
Since $H'\leq H$ has finite index, after passing to a power we can assume $h\in H_a\times (G_e\cap H_b)$. Since $G_e< G_v$, the corresponding hyperplane in the $G_v$--essential core $\mathcal V^{ess}_v$ of $\mathcal V_v$ is $G_e$--essential as well. Without loss of generality assume that $h$ stabilizes a hyperplane $\mathcal V_a\times \mathfrak h$ where $\mathfrak h$ is a hyperplane in $\mathcal V_b$. Then $h=a\times b$, where $b$ stabilizes $\mathfrak h$. Since $1\times b$ and $H_a\times 1$ stabilize the $G_e$--essential hyperplane $\mathcal V_a\times \mathfrak h$, they cannot generate a finite-index subgroup of $G_e$, whence $b=1$. Thus $h\in H_a$, as desired, and using the same argument we find a nontrivial element in $\integers_b\cap (G_e\cap H_b)$.
\end{proof}

\bibliographystyle{alpha}
\bibliography{graph_man_cocompact_refs}

\end{document}